\newtheorem{theorem}{Theorem}[section]
\newtheorem{lemma}[theorem]{Lemma}
\newtheorem{claim}[theorem]{Claim}
\theoremstyle{definition}
\newtheorem{definition}[theorem]{Definition}
\title{Minors in small-set expanders}
\author{
Michael Krivelevich\thanks{
School of Mathematical Sciences,
Tel Aviv University, Tel Aviv 6997801, Israel. \\
Email: {\tt krivelev@tauex.tau.ac.il}. Research supported in part by NSF-BSF grant 2023688.}
\and
Rajko Nenadov\thanks{School of Computer Science, University of Auckland, New Zealand. Email: \texttt{rajko.nenadov@auckland.ac.nz}. Research supported by the Marsden Fund of the Royal Society of New Zealand.}
}
\date{}
\begin{document}

\maketitle

\begin{abstract}
We study large minors in small-set expanders. More precisely, we consider graphs with $n$ vertices and the property that every set of size at most $\alpha n / t$ expands by a factor of $t$, for some (constant) $\alpha > 0$ and large $t = t(n)$. We obtain the following:

\begin{itemize}
\item Improving results of Krivelevich and Sudakov, we show that a small-set expander contains a complete minor of order $\sqrt{n t / \log n}$.

\item We show that a small-set expander contains every graph $H$ with $O(n \log t / \log n)$ edges and vertices as a minor. We complement this with an upper bound showing that if an $n$-vertex graph $G$ has average degree $d$, then there exists a graph with $O(n \log d / \log n)$ edges and vertices which is not a minor of $G$. This has two consequences: (i) It implies the optimality of our result in the case $t = d^c$ for some constant $c > 0$, and (ii) it shows expanders are optimal minor-universal graphs of a given average degree.
\end{itemize}
\end{abstract}

\section{Introduction}

A graph $H$ is a \emph{minor} of a graph $G$ if one can obtain $H$ from
$G$ by a series of contractions of edges and deletions of vertices and edges. Equivalently, and this is the point of view we will take, $H$ is a minor of $G$ if there are $v(H)$ disjoint subsets $\{W_h \subseteq V(G)\}_{h \in V(H)}$ such that each $W_h$ induces a
connected subgraph of $G$ and there is an edge between $W_h$ and $W_{h'}$ for every $\{h, h'\} \in E(H)$. The notion of minors is central in structural graph theory. For example, the celebrated Wagner's theorem \cite{wagner37planar} states that a graph is planar if an only if it does not contain $K_5$ or $K_{3,3}$ as minors.

We study existence of large graphs as minors in vertex expanders. This line of research has been initiated by Krivelevich and Sudakov \cite{krivelevich09minors}. A particular notion of the expansion they used is that of \emph{$(\alpha, t)$-expanders}. 

\begin{definition}[$(\alpha, t)$-expander]
We say a graph $G$ with $n$ vertices is an $(\alpha, t)$-expander, for some $t \ge 1$ and $0 < \alpha < 1$, if for every $X \subseteq V(G)$ of size $|X| \le \alpha n / t$ we have 
$$
    |N(X)| \ge t|X|,
$$
where $N(X)$ denotes the external neighbourhood:
$$
    N(X) = \{v \in V(G) \setminus X \colon \{v, x\} \in E(G) \text{ for some } x \in X\}.
$$
\end{definition}
Observe that the definition of $(\alpha, t)$-expanders is only meaningful when $\alpha < 1$. The particular regime of parameters that we are interested in is when $t$ is large, potentially being even linear in the number of vertices, and $\alpha$ is constant. There is abundance of $(\alpha, t)$-expanders:
\begin{itemize}
\item We say that a graph $G$ is an \emph{$(n, d, \lambda)$-graph} if it is an $n$-vertex $d$-regular graph with the second largest absolute eigenvalue of its adjacency matrix at most $\lambda$. It  follows  from the Expander Mixing Lemma \cite{alon88mixing} that if $G$ is an $(n,d,\lambda)$-graph with, say, $\lambda < d/4$, then it is a $(\Theta(1), \Theta(d^2/\lambda^2))$-expander (see Lemma \ref{lemma:ndl_expander}). Consequently, a \emph{random $d$-regular graph} with $n$ vertices, where $d_0 \le d < n/2$ for a sufficiently large constant $d_0$, is with high probability a $(\Theta(1), \Theta(d))$-expander (see \cite{broder98randomreg,sarid23gap,tikhomirov19randomreg}). 

\item A \emph{$d$-out random graph} with $n$ vertices, a graph obtained by taking for each vertex $d$ randomly and independently chosen edges incident to it, is with high probability a $(\Theta(1), \Theta(d))$-expander for $4 \le d < n/2$ (see, e.g., Lemma 17.17 in the online version of \cite{frieze16introduction}).

\item It is well known that the binomial random graph $G(n,p)$, with $p \ge C \log n / n$, for a constant $C > 1$, is typically an $(\Theta(1), \Theta(np))$-expander. For $p < \log n / n$ the random graph $G(n,p)$ is typically not an expander due to some vertices being isolated. However, in this case one can remove a few vertices such that the resulting graph is again a $(\Theta(1), \Theta(np))$-expander (see Lemma \ref{lemma:one2all}).
\end{itemize}
To summarise, the notion of an $(\alpha, t)$-expander abstracts out one particular, very important aspect of a large class of graphs. As such it provides a framework for studying them in a unified way.

\paragraph{Complete minors.} A topic that has attracted significant attention is determining the size of a largest complete minor as a function of some other graph parameter. The most important open problem in this direction is Hadwiger's conjecture \cite{hadwiger43conj}. This conjecture states that $G$ contains $K_{\chi(G)}$ as a minor, where $\chi(G)$ denotes the chromatic number of $G$. Bollob\'as, Caitlin, and Erd\H{o}s \cite{bollobas80clique} showed that Hadwiger's conjecture holds for almost all graphs. That is, with high probability, the largest complete minor in the uniformly chosen random graph $G(n, 1/2)$ is of order $\Theta( n / \sqrt{\log n})$, whereas its chromatic number is of order $\Theta(n / \log n)$ (e.g.\ see \cite{bollobas88chromatic}). Kostochka \cite{kostochka84hadwiger} and Thomason \cite{thomason84hadwiger} obtained a bound on the largest complete minor of order $\Omega(\chi(G) / \sqrt{\chi(G)})$. In the recent breakthroughs by Norin, Postle, and Song \cite{norin23hadwiger} and Delcourt and Postle \cite{delcourt2024reducing}, this has been improved to $\Omega(\chi(G) / \log \log (\chi(G)))$. We remark that the results of Kostochka \cite{kostochka84hadwiger} and Thomason \cite{thomason84hadwiger} actually show a tight lower bound on the order of magnitude of the largest complete minor as a function of the average degree, which then implies the bound in terms of the chromatic number. For a recent short proof of this result, see \cite{alon23shortminor}.

A related problem is the study of complete minors in graphs with some given (structural) properties. This includes graphs with large girth \cite{diestel05dense,krivelevich09minors,kuhn03minors}, $H$-free graphs for various fixed graph $H$ \cite{bucic22hfree,dvovrak2019independence,krivelevich09minors,kuhn05kss}, graphs without small vertex separators \cite{alon90separator,plotkin94shallow}, graphs without a large independent set \cite{balogh11independence,fox10independence}, lifts of graphs \cite{drier06lifts}, sparse random graphs \cite{fountoulakis08sparse} and random regular graphs \cite{fountoulakis09regular}, random subgraphs of graphs with large degree \cite{erde21randomsubgraphs}. This list, which is by no means exhaustive, illustrates the importance of the topic. Some of these results were unified and extended by the authors \cite{krivelevich21sparsecut}, who studied large complete minors in graphs with strong edge-expansion properties. Here we focus on graphs with strong vertex-expansion properties. Edge-expansion and vertex-expansion are, in general, incomparable.

Large complete minors in expanders were first studied by Krivelevich and Sudakov \cite{krivelevich09minors}. Two of the main results \cite[Corollary 4.2 \& Proposition 4.3]{krivelevich09minors}, combined, read as follows.

\begin{theorem}[\cite{krivelevich09minors}] \label{thm:benny_michael}
    Let $G$ be an $(\alpha, t)$-expander with $n$ vertices, for some $t \ge 10$ and $0 < \alpha < 1$. Then the largest complete minor in $G$ is of order
    $$
        \Omega_\alpha\left( \frac{\sqrt{nt \log t}}{\log n}  + \sqrt{\frac{n \log t}{\log n}}   \right)
    $$
\end{theorem}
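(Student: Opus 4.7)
Plan.

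I would prove the two terms in the lower bound separately, corresponding to the regimes of $t$ in which each dominates.

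\textbf{Second term $\sqrt{n \log t / \log n}$ (small $t$).} The idea is a \emph{paths-between-seeds} construction. Set $k = c \sqrt{n \log t / \log n}$, pick $k$ disjoint seed vertices $v_1, \ldots, v_k \in V(G)$, and iteratively construct pairwise internally disjoint paths $P_{ij}$, $i < j$, of length at most $\ell = O(\log n / \log t)$ between them. Such short paths exist because the expansion property forces the diameter of $G$ to be $O(\log n / \log t)$: starting from any $X$ with $|X| \le \alpha n/t$, applying $|N(X)| \ge t|X|$ grows $X \cup N(X)$ by a factor of $t+1$, so iterating shows that every BFS ball reaches at least $\alpha n /t$ vertices within $O(\log n / \log t)$ steps (and then $n/2$ in one further step, combined with a standard argument that large sets of opposite sides must share an edge in a connected graph). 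Define each branch set $W_i$ as $v_i$ together with the $v_i$-side halves of all $P_{ij}$; the total vertex budget is $O(k^2 \ell) = O(n)$ by the choice of $k$.

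\textbf{First term $\sqrt{nt \log t}/\log n$ (large $t$).} Here I would use the expansion more aggressively. Let $k = c\sqrt{nt \log t}/\log n$ and $s = n/(2k)$. The plan is to greedily grow $k$ disjoint connected sets $T_1, \ldots, T_k$, each of size $s$, by iterative BFS; each $T_i$ then satisfies $|N(T_i)| \ge ts$ by expansion (we need $s \le \alpha n / t$, which holds in the regime where this term is meaningful, namely $t = O(n \log t / \log^2 n)$). Because $ts^2/n$ is much larger than $1$ in our parameter regime, a double-counting argument shows that the overwhelming majority of pairs $(T_i, T_j)$ are already connected by an edge (i.e.\ $N(T_i) \cap T_j \ne \emptyset$); the relatively few remaining "missing" pairs are then connected by short paths of length $O(\log n / \log t)$ drawn from the leftover $n - ks$ vertices, exactly as in the second-term construction, and balancing the two budgets gives the advertised $k$.

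\textbf{Main obstacle.} The central technical issue in both regimes is maintaining path-disjointness while the graph from which paths are drawn slowly shrinks. I would rely on the fact that $(\alpha, t)$-expansion is robust under small vertex deletions (removing $o(\alpha n/t)$ vertices preserves an $(\alpha', t')$-expansion with mild loss of constants), so that the iterative path-finding procedure continues to produce paths of length $O(\log n / \log t)$ throughout. For the first term, the extra subtlety is deterministically guaranteeing that most pairs $(T_i, T_j)$ are already directly adjacent rather than merely in expectation; making this precise is likely the hardest step, and may require either a probabilistic seed placement followed by a concentration argument, or a clever greedy rule that at every BFS step grows the set whose boundary in the yet-unclaimed region is currently smallest.
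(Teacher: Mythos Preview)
This theorem is not proved in the paper; it is quoted from \cite{krivelevich09minors} (their Corollary~4.2 and Proposition~4.3 combined), so there is no proof here to compare your proposal against. The paper only remarks, at the start of Section~\ref{sec:minors}.2, that the second term also follows from its minor-universality result Theorem~\ref{thm:expanding_minors}.

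That said, your sketch has a genuine gap at exactly the point you flag as the main obstacle. You write that you would ``rely on the fact that $(\alpha,t)$-expansion is robust under small vertex deletions (removing $o(\alpha n/t)$ vertices preserves an $(\alpha',t')$-expansion with mild loss of constants)''. But by your own accounting the total number of path-vertices removed in the second-term construction is $\Theta(k^2\ell)=\Theta(n)$, which for superconstant $t$ is vastly larger than $\alpha n/t$; removing a constant fraction of the vertices of an $(\alpha,t)$-expander need not leave any expander at all. The correct mechanism---and this is precisely what the present paper uses to prove its stronger Theorems~\ref{thm:complete_minors} and~\ref{thm:expanding_minors}---is not a blanket robustness claim but a maximal-partition (``garbage set'') argument in the style of Alon--Seymour--Thomas and Plotkin--Rao--Smith: one maintains a set $D$ into which any subset with too-small neighbourhood in the leftover $U$ gets absorbed, argues by maximality of $|D|$ that $\Gamma[U]$ remains a $(\beta/2,d/2)$-expander of small diameter throughout (Claim~\ref{claim:small_diameter} here), and additionally pre-processes via Lemma~\ref{lemma:robust_partition} to guarantee $\Gamma[U]$ stays connected after these deletions. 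Your first-term sketch has the same robustness gap and is otherwise only a heuristic; the adjacency-counting step you correctly identify as hardest is indeed the crux in \cite{krivelevich09minors}, and the present paper bypasses it entirely via Lemma~\ref{lemma:robust_partition} and Lemma~\ref{lemma:efficient_cover}.
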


For $t \ge \log n$, the bound in the previous theorem evaluates to $\frac{\sqrt{nt \log t}}{\log n}$, whereas for $t < \log n$ it gives $\sqrt{n \log t / \log n}$. Our first main result improves upon this in the regime where $t$ is superconstant or subpolynomial.

\begin{theorem} \label{thm:complete_minors}
    For every $0 < \alpha < 1$ there exist $\xi > 0$ and $n_0, t_0 \in \mathbb{N}$ such that the following holds. Suppose $G$ is an $(\alpha, t)$-expander with $n \ge n_0$ vertices, for some $t_0 \le t \le \sqrt{n}$. Then $G$ contains a complete minor of order
    $$
        \xi \sqrt{\frac{nt}{\log n}}.
    $$
\end{theorem}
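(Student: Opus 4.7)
The plan is to exhibit $k = \xi\sqrt{nt/\log n}$ pairwise disjoint connected branch sets $W_1, \ldots, W_k$ such that every two have a $G$-edge between them, for a suitably small constant $\xi > 0$. Set $s = n/k = \Theta(\sqrt{n\log n/t})$ as the target branch-set size. The heuristic is that $G$ has $\ge nt/2$ edges (since applying expansion to singletons gives $\delta(G) \ge t$), and distributed across $\binom{k}{2}\approx nt/(2\log n)$ pairs these allocate about $\log n$ edges per pair---exactly the Chernoff threshold needed to guarantee every pair has at least one cross-edge under a random partition. Note that $s \le \alpha n/t$ holds for all $t_0 \le t \le \sqrt{n}$ once $n$ is large enough, so the small-set expansion hypothesis applies directly to each $V_i$ of size $s$.

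Stage 1 (random cores and pairwise cross-edges). Reserve a connector pool $R \subseteq V(G)$ of size $\beta n$ for a small constant $\beta > 0$, and partition the remaining vertices uniformly at random into cores $V_1, \ldots, V_k$ of size $s$ each. For a fixed pair $(i,j)$, the $(\alpha, t)$-expansion gives $|N(V_i)| \ge t s$, and the expected number of cross-edges to $V_j$ under the random partition is at least $\Omega(s^2 t/n) = \Omega(\log n)$. A Chernoff-type bound on the indicator random variables for the vertex placements yields probability $\exp(-\Omega(\log n))$ that a given pair has no cross-edge, and choosing $\xi$ sufficiently small makes the union bound over $\binom{k}{2}$ pairs succeed with positive probability.

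Stage 2 (connectivity within a branch set). Having fixed a partition in which all pairs are cross-connected, augment each core $V_i$ to a connected set $W_i \supseteq V_i$ using additional vertices from $R$, ensuring the $W_i$ are pairwise disjoint. For this, run a BFS from $V_i$ in $G$ restricted to $V_i \cup R$: since $|V_i| \le \alpha n/t$ each BFS layer has size at least $t$ times the previous one until it reaches $\Omega(n)$ vertices, so a spanning tree of $V_i$ using $O(s)$ vertices from a private portion $R_i \subseteq R$ exists. The total connector budget is $O(ks) = O(n)$, which fits into $R$ once $\beta$ is chosen large enough; a sequential greedy allocation keeps the $R_i$ disjoint.

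The main obstacle is Stage 2: a uniformly random $V_i$ of size $s$ may contain vertices that, in $G$, are far from one another in the BFS sense, so the spanning tree might require many more than $O(s)$ auxiliary vertices. To handle this I would strengthen Stage 1 to produce cores that are already ``clumped'': for instance, start from $k$ random seeds and assign each vertex to the nearest seed via a parallel BFS, producing a Voronoi-style partition into connected cells of size $\approx s$. The cross-edge analysis of Stage 1 then has to be redone under the Voronoi distribution, which is the delicate technical point---one needs to argue, using iterated expansion, that the resulting cells are spread out enough that the boundary-edge count still concentrates. An alternative is a two-round exposure: first apply a random partition to fix cross-edges, then repair ``bad'' cores (those that cannot be spanned cheaply) by re-sampling or path-patching from $R$, keeping the union bound tight throughout.
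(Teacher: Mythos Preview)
Your Stage 1 is essentially sound: with each vertex placed uniformly into one of $k$ parts, conditioning on $V_i$ and using $|N_G(V_i)|\ge t|V_i|$ gives $\Pr[N_G(V_i)\cap V_j=\emptyset]\le e^{-\Theta(ts^2/n)}=n^{-\Theta(1/\xi^2)}$, and for $t\le\sqrt n$ the number of pairs is at most $n^{3/2}$, so a small enough $\xi$ closes the union bound.

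The genuine gap is Stage 2, and neither of your suggested repairs closes it. Connecting $s$ \emph{random} terminals in an $(\alpha,t)$-expander by a Steiner tree costs $\Theta\!\bigl(s\cdot\frac{\log(n/s)}{\log t}\bigr)$ vertices in general: a typical pair of random points sits at distance $\Theta(\log n/\log t)$, and a recursive ``random hub $+$ shortest paths'' scheme only matches this, it does not beat it. Summed over all $k$ parts this is $\Theta\!\bigl(n\cdot\frac{\log n}{\log t}\bigr)$ connector vertices, which exceeds any $\beta n$ reserve whenever $t=n^{o(1)}$ --- precisely the regime in which the theorem improves on prior work. Your Voronoi alternative fails for the dual reason: a BFS ball of size $s$ has boundary at most $O(ts)$, so a Voronoi cell can be adjacent to only $O(t)$ other cells, far short of the $k=\Theta(\sqrt{nt/\log n})$ needed for a complete minor. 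The tension between ``connected'' and ``spread out enough to touch everyone'' is real, and an all-at-once random partition cannot resolve it.

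The paper avoids this tension by building branch sets \emph{sequentially} and, crucially, by changing the target: the new set $W_{k+1}$ is not asked to touch each previous $W_j$ (size $\ell\approx s$) but only to touch $N_\Gamma(W_j)\cap U$, which has size $\Omega(t\ell)$ thanks to a maximality/trash-set argument that keeps these neighbourhoods large inside the still-unused part $U$. A random sprinkling of $O\!\bigl(\tfrac{n}{t\ell}\log t\bigr)$ seeds then lands in every such neighbourhood with room to spare, and stringing the seeds together by shortest paths costs
\[
O\!\left(\frac{n}{t\ell}\log t\cdot\frac{\log n}{\log t}\right)=O(\ell)
\]
vertices (this is Lemma~3.1). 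A separate preprocessing step (Lemma~2.4) passes to a subgraph in which any two large pieces are joined by an edge after deleting a small set, which is what keeps $\Gamma[U]$ connected with small diameter throughout the iteration. The factor-$t$ saving from aiming at $N(W_j)$ rather than at $W_j$ itself is exactly what makes the budget close; your scheme has no analogue of it.
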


The upper bound $t < \sqrt{n}$ is somewhat arbitrary and can be significantly weakened. However, as that regime is already taken care of by Theorem \ref{thm:benny_michael} we did not try to optimise it.  As remarked earlier, we treat $\alpha$ as a constant and take the expansion factor $t$ and the number of vertices $n$ to be sufficiently large with respect to it. Inspection of the proof shows that this dependency is polynomial. Since it is not of our primary concern, we have decided to avoid further technical difficulties by not computing it precisely.

The authors \cite{krivelevich21sparsecut} showed that if $G$ has average degree $d$ and strong edge-expansion properties, then it contains a complete minor of order $\sqrt{n d / \log d}$. Random $d$-regular graphs, for $3 \le d \le n/2$, show that this is tight \cite{fountoulakis09regular,krivelevich21sparsecut}. It remains an interesting question whether the bound in Theorem \ref{thm:complete_minors} can be improved to $\sqrt{nt / \log t}$. This would be a rather significant improvement over the result from \cite{krivelevich21sparsecut} as in Theorem \ref{thm:complete_minors} we do not assume anything about the average degree of $G$.

\paragraph{General minors.} Instead of asking whether a graph contains a complete graph of certain size as a minor, one can also ask whether it contains some given graph $H$ as a minor. A question that has been extensively studied is the smallest average degree of a graph $G$ which enforces a particular fixed graph $H$ as a minor (by `fixed' we mean that the number of vertices of $G$ is significantly larger than the number of vertices of $H$). See \cite{chudnovsky11density,kostochka08kstminor,kostochka10dense,kuhn05forcing,myers05extremal,reed16forcing} and references therein; for recent progress, see \cite{haslegrave22extremal,hendrey22extremal}. 

Here we are interested in the largest $m$ for which a given graph is \emph{$m$-minor-universal}, that is, it contains every graph with at most $m$ vertices and at most $m$ edges as a minor. Benjamini, Kalifa, and Tzalik \cite{benjamini2025cube} studied minor-universality of hypercubes, and the optimality of their result was established by Hogan, Michel, Scott, Tamitegama, Tan, and Tsarev \cite{hogan2025tight}. Minor-universality in expanders was first studied by Kleinberg and Rubinfeld \cite{kleinberg96shortpath}, and more recently by the authors \cite{krivelevich19howtofind} and Chuzhoy and Nimavat \cite{chuzhoy2019large}. Briefly, \cite[Corollary 8.3]{krivelevich19howtofind} states that if a graph has the property that  every set of vertices of size at most $n/2$ expands by a factor of $\beta > 0$, then it is $\Omega_\beta(n / \log n)$-minor-universal, and this is optimal. This settles the case of graphs with weak vertex expansion properties. Our second main result addresses the case where $G$ has strong vertex expansion properties.

\begin{theorem} \label{thm:expanding_minors}
    For every $0 < \alpha < 1$ there exist $\xi > 0$ and $n_0, t_0 \in \mathbb{N}$ such that the following holds. Suppose $G$ is an $(\alpha, t)$-expander with $n \ge n_0$ vertices, for some $t \ge t_0$. Then $G$ is $m$-minor-universal for
    $$
        m = \xi n\; \frac{\log t}{\log n}.
    $$
\end{theorem}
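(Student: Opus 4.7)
The plan is to realize $H$ as a minor of $G$ by constructing $m$ pairwise disjoint connected \emph{branch sets} $W_1,\dots,W_m\subseteq V(G)$ and ensuring an edge of $G$ between $W_i$ and $W_j$ for every $h_ih_j\in E(H)$. Each $W_i$ starts as a small seed of size $s\approx c\log n/\log t$ and grows by absorbing short routing paths of length $\ell = O(\log n/\log t)$ as the edges of $H$ are processed. The total vertex usage $ms+m\ell = O(\xi n)$ stays well below $n$ once $\xi$ is small.

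A central technical ingredient is a chain of expansion consequences of the small-set hypothesis, valid once $t$ is large enough in terms of $\alpha$. Directly, the BFS ball around any vertex reaches size $\alpha n/t$ in $O(\log n/\log t)$ layers. A short counting argument then shows that for any $X$ with $\alpha n/(2t)\le |X|\le \alpha n/2$, applying $(\alpha,t)$-expansion to a subset of $X$ of size $\alpha n/t$ produces $\alpha n$ new neighbours, at most $|X|$ of which lie inside $X$, so $|N(X)|\ge \alpha n/2$. Combining the two, BFS in $G$ from any nonempty set reaches size $\alpha n/2$ within $O(\log n/\log t)$ steps, and the same holds inside $G-Z$ provided $|Z|$ is at most a small fraction of $\alpha n$.

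The construction proceeds in two phases. \emph{Phase one} greedily selects $W_1,\dots,W_m$ as pairwise disjoint BFS balls of size $s$; this is possible because $s\le \alpha n/t$ and $ms$ is only a small fraction of $n$, so the residual graph retains small-set expansion throughout. \emph{Phase two} processes the edges of $H$ in a DFS-based order, maintaining the set $Z$ of vertices committed so far. For each edge $h_ih_j$, a two-sided BFS from $W_i$ and from $W_j$ inside $G-Z$ reaches size $\alpha n/2$ on each side in $O(\log n/\log t)$ steps; a routing path of length $O(\log n/\log t)$ is then extracted and its internal vertices absorbed into one of the endpoint branch sets. Each routing adds $O(\log n/\log t)$ to $|Z|$, so the total over $m$ edges is $O(\xi n)$, keeping $|Z|$ below its threshold.

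The main obstacle is guaranteeing that the two BFS fronts actually intersect: sets of size $\alpha n/2$ inside a residual graph of size $\approx n$ need not meet unless $\alpha$ is close to $1$. To close this gap I plan to push each front further using a weaker but still usable expansion bound past the $\alpha n/2$ plateau (namely $|N(X)|\ge \alpha n - |X| + \alpha n/t$ for $\alpha n/t\le |X|\le \alpha n$, obtained by the same subset trick applied to $X$), and to schedule the edges of $H$ so that by the time $h_ih_j$ is routed, both $W_i$ and $W_j$ have already absorbed enough earlier routings to seed BFS fronts whose union covers most of $V(G-Z)$. Balancing this growth schedule against $|Z|$ and the various expansion thresholds is the delicate bookkeeping step of the proof and dictates how small $\xi$ must be chosen in terms of $\alpha$.
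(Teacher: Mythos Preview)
Your plan correctly identifies the crux of the matter, but the way you propose to handle it does not work.

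The ``main obstacle'' you flag --- that two BFS fronts of size $\approx \alpha n/2$ need not meet when $\alpha<1/2$ --- is precisely the difficulty. Your two remedies both fail. First, the bound $|N(X)|\ge \alpha n - |X| + \alpha n/t$ for $\alpha n/t\le |X|\le \alpha n$ only guarantees $|X\cup N(X)|\ge \alpha n + \alpha n/t$; iterating gives no more, because the $(\alpha,t)$-hypothesis says nothing about sets larger than $\alpha n/t$ beyond this subset trick, and the front genuinely plateaus near $\alpha n$. An $(\alpha,t)$-expander can be disconnected (take two disjoint copies of an $(\alpha',t)$-expander), so without further input two large fronts really may never meet. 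Second, the ``scheduling'' idea cannot work for arbitrary $H$: if $H$ is a perfect matching, every branch set is involved in exactly one edge and there are no earlier routings to absorb, so no amount of scheduling makes the seeds grow before they must be connected.

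The paper resolves this with a preprocessing step (its Lemma~2.4) that is the heart of the argument: one passes to an induced subgraph $\Gamma=G[X]$ of linear size that is again a small-set expander \emph{and} has the additional property that for every partition $X=R\cup A\cup B$ with $|R|$ small and $|A|,|B|\ge \beta|X|/4$ there is an edge between $A$ and $B$. This is obtained by repeatedly splitting off the smaller side of any offending partition. Inside $\Gamma$ the residual graph $\Gamma[U]$ is then provably connected with diameter $O(\log n/\log t)$, so a single shortest path suffices to route each edge. The paper also first reduces to $H$ of maximum degree $3$, which keeps every branch set at size $O(\log n/\log t)$ and makes the bookkeeping clean; your scheme, routing all edges of $H$ through the two endpoint branch sets, would not control individual $|W_i|$ for high-degree vertices of $H$. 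The missing ingredient in your proposal is this preprocessing lemma (or an equivalent device guaranteeing connectivity of the residual graph); without it the argument does not close for $\alpha\le 1/2$.
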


We complement this with an upper bound on minor-universality for a general graph with a given average degree. A significantly weaker upper bound of order $O(nd / \log n)$ was obtained by Chuzhoy and Nimavat \cite{chuzhoy2019large}.

\begin{theorem} \label{thm:upper_bound}
Let $G$ be graph on $n \ge n_0$ vertices and of average degree $d>1$, where $n_0$ is a sufficiently large absolute constant. Then $G$ is not $m$-minor-universal for 
$$
    m = 6n\; \frac{\ln (d+2)}{\ln n}.
$$
\end{theorem}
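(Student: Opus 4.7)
The plan is to prove the theorem by a pigeonhole/counting argument. I upper-bound the number of distinct labeled minors $H$ of $G$ on the vertex set $[m]$ with exactly $m$ edges, and show it is strictly less than $\binom{\binom{m}{2}}{m}$, the total number of labeled graphs on $[m]$ with $m$ edges. Any ``missing'' labeled graph is then a specific $H$ which is not a minor of $G$, witnessing the non-universality.

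For each labeled minor $H$ of $G$, fix a realization: disjoint connected branch sets $W_1, \dots, W_m \subseteq V(G)$, a spanning tree $T_i$ of $G[W_i]$ for each $i$, and, for every edge $\{i,j\} \in E(H)$, a witness $G$-edge between $W_i$ and $W_j$. I encode each realization compactly via a parent map $p \colon V(G) \to V(G) \cup \{\mathrm{root}, *\}$: either $p(v)$ is a neighbor of $v$ (its parent in the spanning tree of its branch set), $p(v) = \mathrm{root}$ (marking $v$ as the root of its branch set), or $p(v) = *$ (marking $v$ as unused). By concavity of $\log$ and Jensen's inequality,
\[
\prod_{v \in V(G)} (\deg(v) + 2) \;\le\; (d+2)^n .
\]
Assigning labels in $[m]$ to the $m$ branch sets contributes a factor of $m!$, and choosing the $m$ witness edges from $E(G)$ contributes at most $\binom{nd/2}{m}$. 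So the number of labeled minors is at most $(d+2)^n \cdot m! \cdot \binom{nd/2}{m}$.

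The main inequality to establish is thus
\[
(d+2)^n \cdot m! \cdot \binom{nd/2}{m} \;<\; \binom{\binom{m}{2}}{m} .
\]
Taking logarithms with the estimates $\ln m! = m\ln m - m + O(\ln m)$, $\ln\binom{nd/2}{m} \le m\ln(end/(2m))$, and $\ln\binom{\binom{m}{2}}{m} \ge m\ln((m-1)/2)$, the $m\ln m$ contributions on the two sides partially cancel; what remains is essentially a comparison between $n\ln(d+2)$ and a term of order $m \ln\!\bigl(cm^2/(nd)\bigr)$. In the nontrivial regime where $m \ll n$ (noting that if $m \ge n$ then the theorem is trivial, as $G$ has only $n$ vertices), we have $\ln m = (1 - o(1))\ln n$, and plugging $m = 5n\ln(d+2)/\ln n$ into this comparison yields enough slack for the inequality to hold once $n \ge n_0$.

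The hardest part is the fine-tuning in the last step to achieve the sharp constant $5$: a naive combination of the three factors $(d+2)^n$, $m!$, $\binom{nd/2}{m}$ is borderline, and one must carefully track lower-order terms and use that the product $m! \cdot \binom{nd/2}{m}$ condenses to roughly $e^{m \ln(nd/2)}$, so that the $m \ln m$ and $m \ln (nd/2m)$ pieces balance against each other. Any loss here worsens the constant, so the argument must either tighten the parent-pointer bound (perhaps by further exploiting the spanning-forest structure so that edges of $T_i$ and witness edges are not counted twice) or the witness-edge bound (using that the $m$ witnesses must realize $m$ distinct unordered pairs of branch sets, a constraint not captured by $\binom{nd/2}{m}$); one of these refinements is what pins down the constant $5$ in the statement.
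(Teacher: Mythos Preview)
Your counting framework is the right one, and the parent-pointer encoding together with the AM--GM bound $\prod_v(\deg(v)+2)\le(d+2)^n$ is exactly the kind of estimate the paper uses. The fatal problem is your choice of target graphs: with $m$ vertices and $m$ edges the main inequality is simply false, not just borderline. Indeed, $m!\cdot\binom{nd/2}{m}$ is a falling factorial and is at most $(nd/2)^m$, while $\binom{\binom{m}{2}}{m}\le (em/2)^m$. So your inequality would force
\[
(d+2)^n \;<\; \Bigl(\tfrac{em}{nd}\Bigr)^{m},
\]
but $m=5n\ln(d+2)/\ln n\ll nd$, hence the right-hand side is below~$1$. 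Your sketch claims the residual comparison is between $n\ln(d+2)$ and $m\ln\bigl(cm^2/(nd)\bigr)$; in fact the $m\ln m$ from $\ln m!$ and from $\ln\binom{\binom{m}{2}}{m}$ cancel, leaving $m\ln\bigl(cm/(nd)\bigr)$, which is negative. The deficit is a factor of order $(nd/m)^m$, far beyond what any tightening of the parent-pointer or witness-edge counts could recover.

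The paper closes this gap by changing the shape of the target graphs: it counts minors with $m/2$ vertices and $m$ edges. Two things then happen. First, with only $m/2$ branch sets one may (after adding $\le n-1$ edges to make $G$ connected) assume they partition $V(G)$, so they are encoded by a spanning tree of $G$ together with a choice of $m/2-1$ edges to delete, costing $(d+2)^n\binom{n-1}{m/2-1}$. Second, and decisively, the number of non-isomorphic graphs on $m/2$ vertices with $m$ edges is at least $\binom{\binom{m/2}{2}}{m}/(m/2)! \ge (m/256)^{m/2}$, of order $m^{m/2}$ rather than merely $c^m$. Combining these, the ratio of ``graphs'' to ``minors'' becomes $\bigl(m^4/\bigl(C(d+2)^2 n^3\bigr)\bigr)^{m/2}\cdot(d+2)^{-n}$, and with $m=5n\ln(d+2)/\ln n$ and $d+2\le n^{1/6}$ the base $m^4/n^3$ furnishes the extra power of $n$ that beats $(d+2)^n$. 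In short: the missing idea is not a refinement of your bounds but the use of \emph{denser} target graphs (average degree $4$ instead of $2$), which dramatically inflates the right-hand side.
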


The previous result shows not only that Theorem \ref{thm:expanding_minors} is optimal for $(\alpha, d^c)$-expanders of average degree $d$, for some constant $c > 0$, but also that these graphs are optimal graphs with the average degree $d$ from the point of view of minor-universality.

\paragraph{Structure.} The paper is organised as follows. In the next section we state  properties of small-set expanders. The main new ingredient is Lemma \ref{lemma:robust_partition} which allows us to pass to a subgraph of a small-set expander which is, again, a small-set expander and in addition there is an edge between every two large sets of vertices. In order to demonstrate ideas, in Section \ref{sec:minors} we first prove Theorem \ref{thm:expanding_minors}. The proof of Theorem \ref{thm:complete_minors} follows the same strategy, with Lemma \ref{lemma:efficient_cover} being the main difference. We finally prove Theorem \ref{thm:upper_bound} in Section \ref{sec:upper}. Throughout the paper we tacitly avoid the use of floor and ceiling. All inequalities hold with a sufficiently large margin to accommodate for this.

\section{Properties of small-set expanders}

In this section we collect some properties of small-set expanders. The key new result is Lemma \ref{lemma:robust_partition}.

Given a graph $G$ and two vertices $x,y \in V(G)$, we denote by $\textrm{dist}_G(x,y)$ the length of a shortest path from $x$ to $y$ (counting the number of edges). If $x = y$, then $\textrm{dist}_G(x, y) = 0$. More generally, for $X, Y \subseteq V(G)$, we set $\textrm{dist}_G(X, Y) = \min_{x \in X, y \in Y} \textrm{dist}_G(x,y)$.

\begin{lemma} \label{lemma:ball}
    Let $G$ be an $(\alpha, t)$-expander with $n$ vertices, for any $t \ge 1$ and $0 < \alpha < 1$. Then the ball of size $z \in \mathbb{N}_0$ around a given vertex set $U$, defined as
    $$
        B_G(U, z) = \{ v \in V(G) \colon \mathrm{dist}(U, v) \le z \},
    $$
    is of size at least $\min\{\alpha n, |U|(1 + t)^z\}$.
\end{lemma}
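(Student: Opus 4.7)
I would prove the bound by induction on $z$. The base case $z=0$ is immediate: $B_G(U,0)=U$ has size $|U|=|U|(1+t)^0$, which is at least $\min\{\alpha n,|U|(1+t)^0\}$.

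For the inductive step, assume the bound for $z-1$, and write $B=B_G(U,z-1)$ for brevity. If $|B|\ge \alpha n$, the monotonicity $B_G(U,z)\supseteq B$ already gives the claim, so I may assume $|B|<\alpha n$ and (by the induction hypothesis) $|B|\ge |U|(1+t)^{z-1}$. The key observation is that any neighbour of a vertex in $B$ lies in $B_G(U,z)$, so for every $X\subseteq B$ we have $X\cup N(X)\subseteq B_G(U,z)$, and since $X$ and $N(X)$ are disjoint by definition of the external neighbourhood, $|B_G(U,z)|\ge |X|+|N(X)|$.

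I would then split into two subcases depending on whether the expander property applies to all of $B$. If $|B|\le \alpha n/t$, take $X=B$; the $(\alpha,t)$-expansion yields $|N(B)|\ge t|B|$, and therefore
$$
    |B_G(U,z)|\ge |B|+|N(B)|\ge (1+t)|B|\ge |U|(1+t)^z,
$$
as required. Otherwise $|B|>\alpha n/t$, in which case pick any subset $X\subseteq B$ of size $\alpha n/t$; then $|N(X)|\ge t|X|=\alpha n$, giving $|B_G(U,z)|\ge |X|+|N(X)|\ge \alpha n$, which matches the other branch of the minimum.

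\paragraph{Where the care is needed.}
The whole argument is routine once the right invariant is set up; the only place one has to be a little careful is the subcase $|B|>\alpha n/t$, since the expansion hypothesis cannot be applied to $B$ itself but only to a suitable subset. The disjointness of $X$ and $N(X)$ (built into the definition of the external neighbourhood) is what makes the additive step $|X|+|N(X)|\ge (1+t)|X|$ clean, and avoids any need to track overlaps with $B\setminus X$.
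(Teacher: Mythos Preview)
Your proof is correct and follows essentially the same approach as the paper: induction on $z$, with the inductive step split into the three cases $|B|\ge\alpha n$, $\alpha n>|B|>\alpha n/t$, and $|B|\le\alpha n/t$, handling the middle case by applying the expansion hypothesis to a subset of $B$ of size $\alpha n/t$.
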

\begin{proof}
    We prove the claim by induction on $z$. For $z = 0$, $B_G(U, 0) = U$ thus the claim holds. Suppose it holds for some $z \ge 0$. Observe that $B_G(U, z+1) = B_G(U,z) \cup N_G(B_G(U,z))$. If $|B_G(U, z)| \ge \alpha n$, the claim trivially holds. If $\alpha n > |B_G(U, z)| > \alpha n / t$, then again $|B_G(U, z+1)| \ge \alpha n$ (this can be seen by considering an expansion of an arbitrary subset $X \subseteq B_G(U, z)$ of size $\alpha n / t$). Finally, if $|B_G(U, z)| \le \alpha n / t$, then by the induction assumption and the expansion property we have
    $$
        |B_G(U, z+1)| \ge |U|(1+t)^z + |U|(1+t)^z t = |U|(1 + t)^{z+1}. 
    $$
\end{proof}

The following result is a simple corollary of Lemma \ref{lemma:ball} (e.g.~see \cite[Lemma 5.1]{krivelevich09minors}).

\begin{lemma} \label{lemma:diameter}
    Let $G$ be a connected $(\alpha,t)$-expander with $n$ vertices, for some $t \ge 2$ and $0 < \alpha < 1$. Then the diameter of $G$ is at most $3 \log n / (\alpha \log t)$.
\end{lemma}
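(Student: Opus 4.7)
The plan is to bound $\mathrm{dist}_G(u,v)$ for arbitrary $u,v \in V(G)$ by a direct ball-packing argument along a shortest $uv$-path, using Lemma~\ref{lemma:ball} to guarantee that balls of an appropriate radius already occupy an $\alpha$-fraction of the vertex set.

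The first step is to fix the ``saturation radius.'' Setting $z_1 = \lceil \log(\alpha n)/\log(1+t)\rceil$ ensures $(1+t)^{z_1}\ge \alpha n$, and applying Lemma~\ref{lemma:ball} to the singleton $U=\{w\}$ then yields $|B_G(w,z_1)|\ge \alpha n$ for every vertex $w\in V(G)$. Using $\alpha<1$ and $t\ge 2$ (so that $\log(1+t)\ge \log t$), one checks that $z_1\le \log n/\log t + 1$.

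The second step is the packing. Fix $u,v\in V(G)$ and a shortest $uv$-path $u=w_0,w_1,\ldots,w_m=v$, where $m=\mathrm{dist}_G(u,v)$. Since the $w_i$ lie on a geodesic, $\mathrm{dist}_G(w_i,w_j)=|i-j|$, so the balls $B_G(w_i,z_1)$ and $B_G(w_j,z_1)$ are disjoint whenever $|i-j|\ge 2z_1+1$ (any common point would witness $\mathrm{dist}_G(w_i,w_j)\le 2z_1$). Consequently, selecting the subsequence $w_{k(2z_1+1)}$ for $k=0,1,\ldots,\lfloor m/(2z_1+1)\rfloor$ produces pairwise disjoint balls inside $V(G)$, each of size at least $\alpha n$. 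Summing and comparing with $n$ gives
\[
\left(\left\lfloor \frac{m}{2z_1+1}\right\rfloor +1\right)\alpha n \le n,
\]
which rearranges to $m<(2z_1+1)/\alpha$. Plugging in $z_1\le \log n/\log t + 1$ and absorbing the additive constants (valid once $n,t$ are large enough relative to each other) yields $\mathrm{dist}_G(u,v)\le 3\log n/(\alpha \log t)$.

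There is no substantive obstacle here: the whole argument is a direct packing of balls whose size is supplied by Lemma~\ref{lemma:ball}. The factor $\log n/\log t$ in the final bound is exactly the radius required for a single ball to saturate, and the factor $1/\alpha$ arises because Lemma~\ref{lemma:ball} only guarantees that each such ball fills an $\alpha$-fraction of $V(G)$, so at most $1/\alpha$ disjoint copies can be packed along the geodesic.
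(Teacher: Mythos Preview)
Your argument is correct and is precisely the kind of deduction from Lemma~\ref{lemma:ball} that the paper has in mind; note that the paper does not actually give its own proof here, it only points to \cite[Lemma~5.1]{krivelevich09minors}. The ball-packing along a geodesic is a standard way to turn a lower bound on ball volumes into a diameter bound, and your execution is clean.

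The one soft spot is the last step. From $m < (2z_1+1)/\alpha$ and $z_1 \le \log n/\log t + 1$ you get $m < (2\log n/\log t + 3)/\alpha$, and absorbing the additive $3/\alpha$ into $3\log n/(\alpha\log t)$ genuinely requires $\log n/\log t \ge 3$, i.e.\ $n \ge t^3$. You acknowledge this, and it is entirely in line with the paper's explicit convention (stated at the end of the introduction) that floors, ceilings, and additive constants are tacitly absorbed with sufficient margin; in every application within the paper $t$ is taken large and bounded well below $n$. So this is not a gap in your reasoning, just a point where the lemma's constant $3$ should be read as ``a small absolute constant'' rather than a sharp value valid for all $t \ge 2$.
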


Another folklore fact is that if sets of size from a certain interval expand, then one can remove a small number of vertices such that the remaining graph is a small-set expander.

\begin{lemma} \label{lemma:one2all}
    Let $G$ be a graph with $n$ vertices, and suppose $|N(S)| \ge t|S|$ for every $S \subseteq V(G)$ of size $\alpha n / (2t) \le |S| \le \alpha n / t$, for some $0 < \alpha < 1$ and $t \ge 1$. Then there exists $X \subseteq V(G)$ of size $|X| \ge n - \alpha n / t$, such that $G[X]$ is an $(\alpha/4, t/2)$-expander.
\end{lemma}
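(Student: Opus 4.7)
The plan is to find $R = V(G) \setminus X$ by extremal choice: let $R \subseteq V(G)$ be a set of \emph{maximum cardinality} such that $|R| \le \alpha n / t$ and $|N_G(R)| \le (t/2)|R|$. Such an $R$ exists because $R = \emptyset$ satisfies both conditions. I then set $X = V(G) \setminus R$ and verify the two claims.

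First I would use the hypothesis to control the size of $R$. If $|R|$ were in the range $[\alpha n / (2t), \alpha n/t]$, the assumption of the lemma would force $|N_G(R)| \ge t|R|$, contradicting $|N_G(R)| \le (t/2)|R|$. Hence $|R| < \alpha n/(2t)$, which in particular gives $|X| \ge n - \alpha n/(2t) \ge n - \alpha n/t$, the size bound required.

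Next I would verify the expansion of $G[X]$. Take any nonempty $S \subseteq X$ with $|S| \le \alpha|X|/(2t) \le \alpha n/(2t)$ and consider $R' := R \cup S$. Since $S \cap R = \emptyset$, we have $|R'| = |R| + |S| \le \alpha n/(2t) + \alpha n/(2t) = \alpha n/t$, and $R' \supsetneq R$. By the maximality of $R$, the pair $R'$ must violate the neighborhood inequality, so
$$
    |N_G(R')| > \tfrac{t}{2}|R'| = \tfrac{t}{2}|R| + \tfrac{t}{2}|S|.
$$
On the other hand, every external neighbor of $R'$ lies either in $N_G(R)$ or in $N_G(S) \setminus R = N_{G[X]}(S)$, so
$$
    |N_G(R')| \le |N_G(R)| + |N_{G[X]}(S)| \le \tfrac{t}{2}|R| + |N_{G[X]}(S)|.
$$
Combining the two inequalities yields $|N_{G[X]}(S)| > (t/2)|S|$, which establishes the $(\alpha/4, t/2)$-expansion of $G[X]$.

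The main (mild) obstacle is making sure the size thresholds line up: we need $|R| + |S| \le \alpha n / t$ so that the maximality of $R$ can be invoked on $R' = R \cup S$, and this is exactly why the hypothesis is stated for the window $[\alpha n/(2t), \alpha n/t]$ rather than a single threshold. The factor of $4$ in $\alpha/4$ comes from the fact that after passing to $X$, the relative threshold for "small" sets shrinks with $|X|$, but the slack $|R| \le \alpha n/(2t)$ we removed leaves plenty of room.
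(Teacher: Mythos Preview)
Your proof is correct. The core idea is the same as the paper's --- identify a ``maximal non-expanding'' set $R$, use the window hypothesis to bound $|R| < \alpha n/(2t)$, and deduce that $X = V(G)\setminus R$ is an $(\alpha/4,t/2)$-expander --- but the packaging differs. The paper builds $R$ iteratively, repeatedly peeling off small sets $S\subseteq X$ with $|N_G(S)\cap X| < t|S|/2$ and then sums these bounds to show $|N_G(R)| \le t|R|/2$; you instead take $R$ in one shot as a maximum-cardinality set with $|R|\le \alpha n/t$ and $|N_G(R)| \le (t/2)|R|$, and then get expansion of $G[X]$ directly from maximality applied to $R\cup S$. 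Your version is a bit cleaner (no need to track the shrinking $X_i$'s or sum over rounds), while the paper's iterative formulation makes it transparent that the procedure terminates and mirrors similar ``pass to an expander'' arguments elsewhere in the literature. Both yield exactly the same $X$ up to the choice of which maximal bad set one lands on.
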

\begin{proof}
    Set $X = V(G)$ and $R = \emptyset$, and repeat the following: 
    \begin{itemize}
        \item If $|R| > \alpha n / (2t)$ or $G[X]$ is an $(\alpha/4, t/2)$-expander, then terminate.
        \item Otherwise, there exists $S \subseteq X$ of size $1 \le |S| \le \alpha|X|/(2t) \le \alpha n / (2t)$ such that $|N_G(S) \cap X| < t|S|/2$. Set $X := X \setminus S$ and $R := R \cup S$. Proceed to the next iteration.
    \end{itemize}
    Since the size of $X$ decreases in each round, the procedure eventually terminates. Throughout the procedure we have $V(G) = X \cup R$.
    
    Suppose that $G[X]$ is not an $(\alpha/4, t/2)$-expander, for the final set $X$. Then $|R| > \alpha n / (2t)$. As the size of $R$ increases by at most $\alpha n / (2t)$ in each iteration, we conclude $\alpha n / (2t) < |R| \le \alpha n / t$. By the assumption of the lemma, we have $|N_G(R)| \ge t|R|$. On the other hand, by the definition of the procedure we have $|N_G(R) \cap X| = |N_G(R)| \le t|R|/2 $, which is a contradiction.
\end{proof}

The final result is a key new `pre-processing' lemma. Informally, it states that one can pass to a subgraph of a small-set expander which is again a small-set expander and additionally enjoys the property that between every two large subsets of vertices there is an edge. This property will be important in the course of proving Theorem \ref{thm:complete_minors} and Theorem \ref{thm:expanding_minors}, when arguing that certain subgraphs are connected and have small diameter.

\begin{lemma} \label{lemma:robust_partition}
    Let $G$ be an $(\alpha, t)$-expander with $n$ vertices, where $0 < \alpha < 1$ and $t > 2^{10} / \alpha$. There exists $X \subseteq V(G)$ of size $|X| \ge \alpha n / 8$ such that the following holds for $\beta = \alpha n / (8|X|)$:
    \begin{enumerate}[(1)]
        \item $G[X]$ is a $(\beta, t/4)$-expander, and
        
        \item \label{prop:edge_across} for every partition $X = R \cup A \cup B$ with $|R| \le \alpha |X|/16$ and $|A|, |B| \ge \beta |X| / 4$, there is an edge between $A$ and $B$.
    \end{enumerate}
\end{lemma}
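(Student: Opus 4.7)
The plan is to construct $X$ through an iterative pruning procedure. Initialise $X := V(G)$. As long as $X$ admits a partition $X = R \cup A \cup B$ with $|R| \le \alpha|X|/16$, $|A|, |B| \ge \alpha n/32$, $|A| \le |B|$, and no $G$-edge between $A$ and $B$, update $X := X \setminus (A \cup R)$. Removing the separator $R$ alongside the smaller side $A$ is the key design choice: it guarantees that the final $X$ is disjoint from every separator produced along the way, which is what makes the subsequent expansion analysis go through cleanly.

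For the size and termination bookkeeping, each iteration satisfies $|X_{i+1}| = |B_i| \ge (|X_i| - |R_i|)/2 \ge (15/32)|X_i|$ (using $|R_i| \le \alpha|X_i|/16$ and $\alpha < 1$), while the existence of a bad cut forces $|X_i| \ge \alpha n/16$ (to fit two sides of size at least $\alpha n/32$). Hence the procedure halts after at most $O(\log(1/\alpha))$ rounds, and the final $X$ satisfies $|X| \ge (15/32)(\alpha n/16) > \alpha n/128$. Property~(2) holds at termination by construction.

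The substantive step is property~(1). The structural observation is that for every iteration $i$, $N_G(A_i) \subseteq R_i \cup (V(G) \setminus X_i)$, because there are no $G$-edges from $A_i$ to $B_i = X_i \setminus (R_i \cup A_i)$. Since the final $X$ is disjoint from every $R_i$ and contained in every $X_i$, a putative neighbour $v \in A_i$ of some $s \in X$ would force $s \in N_G(A_i) \cap X \subseteq R_i \cap X = \emptyset$, a contradiction. Hence no vertex of $X$ has any $G$-neighbour inside any $A_i$. For $S \subseteq X$ with $|S| \le \alpha n/(2t)$ this yields
\begin{equation*}
|N_{G[X]}(S)| \;\ge\; |N_G(S)| - \sum_i |N_G(S) \cap R_i| \;\ge\; t|S| - \sum_i |R_i|,
\end{equation*}
using the $(\alpha, t)$-expansion of $G$.

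The main obstacle is controlling $\sum_i |R_i|$ tightly enough to conclude $|N_{G[X]}(S)| \ge (t/4)|S|$, especially for the smallest admissible $|S|$. The crude bound $|R_i| \le \alpha |X_i|/16 \le \alpha n/16$ summed over $O(\log(1/\alpha))$ rounds only gives $\sum_i |R_i| = O(\alpha \log(1/\alpha)\, n)$, which need not be dominated by $(3t/4)|S|$ when $|S|$ is small. I expect to close this gap by some combination of (i) sharper telescoping using the lower bound $|X_{i+1}| \ge (15/32)|X_i|$ to get a true geometric sum, (ii) pre-processing via Lemma~\ref{lemma:one2all} to replace $(\alpha,t)$ by $(\alpha/4,t/2)$ and gain multiplicative slack, and (iii) a post-processing step discarding the few vertices $v \in X$ whose $G$-neighbourhood meets $\bigcup_i R_i$ too heavily, a set which by a volume/averaging argument is small. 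The generous constants $\alpha n/128$ and $\beta = \alpha n/(8|X|)$ in the statement leave ample room for such adjustments.
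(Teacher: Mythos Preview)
Your overall scheme---iteratively peel off a separator $R$ together with one side of a bad cut, then argue that the surviving set inherits expansion because its $G$-neighbourhood can only leak into $\bigcup_i R_i$---is exactly the paper's approach. The structural observation that $N_G(S)\setminus X\subseteq\bigcup_i R_i$ for $S\subseteq X$ is correct and is the heart of the argument. However, there is a genuine gap in your version, and it stems from the choice to keep the \emph{larger} side $B_i$.

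Keeping the larger side gives only $|X_{i+1}|\ge(15/32)|X_i|$, which is a \emph{lower} bound on the new size; it does not yield geometric shrinkage, and your claim of $O(\log(1/\alpha))$ rounds is incorrect. The only guaranteed decrease is $|X_{i+1}|\le |X_i|-\alpha n/32$ (from $|A_i|\ge\alpha n/32$), so the procedure may run for up to $32/\alpha$ rounds. In that regime $\sum_i|R_i|\le\sum_i\alpha|X_i|/16$ can be as large as $\Theta(n)$ (take $|A_i|=\alpha n/32$ and $|R_i|=\alpha|X_i|/16$ at every step and sum the arithmetic progression). None of your proposed fixes closes this: (i) is based on the wrong direction of inequality, (ii) does not touch $\sum_i|R_i|$, and (iii) fails because the volume $\sum_i|R_i|$ is itself $\Theta(n)$, so an averaging argument cannot limit the damage.

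The paper's remedy is simply to keep the \emph{smaller} side at each step. Then $|X_i'|\le n/2^i$, the number of rounds is $O(\log(1/\alpha))$, and $\sum_i|R_i|\le\sum_i\alpha|X_i'|/8\le\alpha n/4$. With this bound in hand, one only verifies $|N_{G[X']}(S)|\ge t|S|/2$ for $|S|$ in the window $[\alpha n/(2t),\alpha n/t]$ (so that $t|S|\ge\alpha n/2$ dominates the loss $\alpha n/4$), and then invokes Lemma~\ref{lemma:one2all} to upgrade this to full $(\beta,t/4)$-expansion after deleting a further small set $R'$. Property~(2) survives this final deletion because $|R\cup R'|$ still fits under the separator threshold used during the iteration.
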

\begin{proof}
    Set $D := \emptyset$ and $X' := V(G)$, and repeat the following: 
    \begin{itemize}        
        \item If there exists a partition $X' = R \cup A \cup B$ with $|R| \le \alpha |X'| / 8$ and $|A|, |B| \ge \alpha n/64$, such that there is no edge between $A$ and $B$, set $D := D \cup R$ and take $X'$ to be smaller of the two sets $A$ and $B$. (Note that we indeed bound $R$ in terms of $|X'|$, and $|A|$ and $|B|$ in terms of $n$.) Proceed to the next iteration.
        
        \item Otherwise, terminate the procedure.
    \end{itemize}
    The set $X'$ shrinks by a factor of $2$ or more in each iteration, thus the set $D$ is of size at most $|D| \le \alpha n / 4$. Once $|X'| < \alpha n / 32$, the procedure terminates as there is no partition $X' = R \cup A \cup B$ with $|A|, |B| \ge \alpha n / 64$. In particular, this implies that the procedure eventually terminates. By the definition of the procedure and the lower bound on $t$, we also have $|X'| \ge \alpha n / 64 \ge \alpha n / t$. As we shall see shortly, $X'$ is, in fact, even somewhat larger than this.
    
     Set $\gamma = \alpha n / |X'|$ and let $G' := G[X']$, for the final set $X'$. The crucial property of the procedure is that $N_G(v) \subseteq X' \cup D$ for every $v \in X'$. Using that $G$ is an $(\alpha, t)$-expander and $\gamma |X'| = \alpha n$, for any $S \subseteq X'$ of size $\gamma |X'| / (2t) \le |S| \le \gamma |X'| / t$ we have
    $$
        |N_{G'}(S)| = |N_G(S) \cap X'| \ge |N_G(S)| - |D| \ge t|S|/2.
    $$
    It is important to note here that $|X'| \ge \gamma |X'| / t$, thus such a set $S \subseteq X'$ indeed exists. By taking $S \subseteq X'$ to be a set of size $\gamma |X'| / t$, we conclude $|X'| > |N_G'(S)| \ge \gamma |X'| / 2$. This implies $\gamma / 2 < 1$.
    
    By Lemma \ref{lemma:one2all} applied on $G'$ with $\gamma/2$ (as $\alpha$) and $t/2$, which we can do as we have just established that $\gamma/2 < 1$, there exists a subset $R' \subseteq X'$ of size $|R'| \le \gamma |X'| / t$ such that $G'[X' \setminus R']$ is a $(\gamma/8, t/4)$-expander. From $t \ge 16 \cdot 64 / \alpha$ and the lower bound $|X'| \ge \alpha n / 64$, we have $|R'| \le \alpha |X'| / 16$. Set $X = X' \setminus R'$ and $\beta = \gamma / 8$. As $G[X]$ is a $(\beta, t/4)$-expander we necessarily have $\beta \le 1$, that is, $|X| \ge \alpha n / 8$.

    It remains to verify that the property \ref{prop:edge_across} holds. Consider some partition $X = R \cup A \cup B$ with $|R| \le \alpha |X| / 16$ and $|A|, |B| \ge \beta |X| / 4 > \gamma |X'|/64 = \alpha n / 64$. Then $|R \cup R'| \le \alpha |X'| / 8$, thus there is an edge between $A$ and $B$ by the fact that the procedure has terminated with $X'$.
\end{proof}

\section{Minors in expanders} \label{sec:minors}

Proofs of Theorem \ref{thm:complete_minors} and Theorem \ref{thm:expanding_minors} follow the overall strategy of Alon, Seymour, and Thomas \cite{alon90separator} and Plotkin, Rao, and Smith \cite{plotkin94shallow}. More closely, Theorem \ref{thm:expanding_minors} follows the proof of \cite[Theorem 8.1]{krivelevich19howtofind}. The proof of Theorem \ref{thm:complete_minors} is based on some further ideas that can be traced to the previous work of authors \cite{krivelevich09minors}. 

\subsection{Minor-universality}

To illustrate the main ideas, we start with the proof of Theorem \ref{thm:expanding_minors}.

\begin{proof}[Proof of Theorem \ref{thm:expanding_minors}]
    We first argue that it suffices to show that $G$ contains, as a minor, every graph $H$ with at most $3m$ vertices and at most $3m$ edges, and with maximum degree at most $3$. Consider a graph $H$ with at most $m$ vertices and at most $m$ edges, and suppose $V(H) = \{1, \ldots, v(H) \}.$ Form the graph $H'$ on the vertex set $V' = \{ (v, i) \colon v \in V(H), i \in \{1, \ldots, \deg_H(v)\} \} \cup V_0$, where the size of $V_0$ equals the number of isolated vertices in $H$. Add an edge between $(v,i)$ and $(w,j)$ if and only if $w$ is the $i$-th smallest neighbour of $v$, and $v$ is the $j$-th smallest neighbour of $v$. Furthermore, add an edge between $(v, i)$ and $(v, i+1)$ for each $v \in V(H)$ and $i \in \{1, \ldots, \deg_H(v) - 1\}$. Then $H$ is clearly a minor of $H'$. The graph $H'$ has maximum degree at most $3$, at most $3m$ vertices and at most $3m$ edges. As the relation of `being a minor' is transitive, it suffices to show that $G$ contains $H'$ as a minor.
    
    Let $X \subseteq V(G)$ be a subset given by Lemma \ref{lemma:robust_partition}. We now pass on to $\Gamma = G[X]$. Recall that $\Gamma$ has $N \ge \alpha n / 8$ vertices, and for $\beta = \alpha n / (8N)$ and $d = t/4$, the following holds:
    \begin{enumerate}[(i)]
        \item $\Gamma$ is a $(\beta, d)$-expander, and
        \item \label{prop:Gammaii} for every partition $V(\Gamma) = R \cup A \cup B$ with $|R| \le \alpha N/16$ and $|A|, |B| \ge \beta N / 4$, there is an edge in $\Gamma$ between $A$ and $B$.
    \end{enumerate}
    For later reference, note that 
    \begin{equation} \label{eq:alpha_beta}
        \beta \ge \alpha / 8 \quad \text{ and } \quad \alpha N / 32 \le \beta N / 4.
    \end{equation}

    Consider some graph $H$ with maximum degree at most $3$, at most $3m$ vertices, and at most $3m$ edges, where $m$ is the largest integer such that
    $$
        3m \cdot \frac{24 \log n}{\log d} < \beta \alpha N / 64 = \alpha^2 n / 512.
    $$
    The choice of $m$ implies $m = \Theta_\alpha(n \log t / \log n)$. Let $\mathcal{P}$ denote the set of all ordered partitions of the form $V(\Gamma) = D \cup \left( \bigcup_{h \in V'} W_h \right) \cup U$, where $V' \subseteq V(H)$ is some subset (it can be a different subset for different partitions), with the following properties:
    \begin{enumerate}[(P1)]
        \item \label{prop:W_v} for every $h \in V'$: $|W_h| \le 24 \log n / (\beta \log d)$ and $\Gamma[W_h]$ is connected, 
        \item \label{prop:W_e} if $\{h, h'\} \in E(H)$ for some $h, h' \in V'$, then there is an edge in $\Gamma$ between $W_h$ and $W_{h'}$, and
        \item \label{prop:D} $|D| \le \alpha N / (32 d) $ and $|N_\Gamma(D) \cap U| < d|D| / 2$.
    \end{enumerate}
    Properties \ref{prop:W_v} and \ref{prop:W_e} imply that $H[V']$ is a minor of $\Gamma$. 
    We frequently use the fact that, for any partition in $\mathcal{P}$, the set $W = \bigcup_{h \in V'} W_h$ is of size
    \begin{equation} \label{eq:W}
        |W| = \left| \bigcup_{h \in V'} W_h \right| = \sum_{h \in V'} |W_h| \le |V(H)| \cdot 24 \log n / (\beta \log d) \le \alpha N / 64.
    \end{equation}
    
    By taking $D = \emptyset$, $V' = \emptyset$, and $U = V(\Gamma)$, we see that $\mathcal{P}$ is not empty. Consider a partition $V(\Gamma) = D \cup \left( \bigcup_{h \in V'} W_h \right) \cup U$ in $\mathcal{P}$ which maximises $|D|$, and in the case there are multiple such partitions take one which further maximises $|V'|$. We prove that then necessarily $V' = V(H)$, which implies that $H$ is a minor of $G$.

    The following claim is stronger than necessary for the proof of Theorem \ref{thm:expanding_minors}. However, it is required in the proof of Theorem \ref{thm:complete_minors} thus we state it in this form.
    
    \begin{claim} \label{claim:v}
        For every $h \in V'$ we have $|N_\Gamma(W_h) \cap U| \ge d|W_v|/2$. 
    \end{claim}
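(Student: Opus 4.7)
My plan is to argue by contradiction. Suppose that some $h \in V'$ violates the claim, so $|N_\Gamma(W_h) \cap U| < d|W_h|/2$; in particular $|W_h| \ge 1$. The idea is to construct a partition in $\mathcal{P}$ with strictly larger $|D|$ (contradicting the maximality built into our choice) by absorbing $W_h$ into $D$. Concretely, I would set $D' := D \cup W_h$, $V'' := V' \setminus \{h\}$, and $U' := U$. Properties \ref{prop:W_v} and \ref{prop:W_e} transfer verbatim to the smaller index set $V''$, so the only task is to verify property \ref{prop:D} for $D'$.

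The neighbourhood inequality is the easy half: the inclusion $N_\Gamma(D') \cap U \subseteq (N_\Gamma(D) \cap U) \cup (N_\Gamma(W_h) \cap U)$, combined with property \ref{prop:D} for $D$ and the standing hypothesis, yields $|N_\Gamma(D') \cap U| < d|D|/2 + d|W_h|/2 = d|D'|/2$. For the size bound I split on whether $|D'| \le \alpha N/(32d)$. If it holds, then $D'$ satisfies property \ref{prop:D}, so the new partition lies in $\mathcal{P}$ with $|D'| > |D|$, contradicting maximality.

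If instead $|D'| > \alpha N/(32d)$, I would derive the contradiction directly from the $(\beta, d)$-expansion of $\Gamma$. A short calculation using $|D| \le \alpha N/(32d)$, the bound on $|W_h|$ from \ref{prop:W_v}, the inequality $\beta \ge \alpha/8$ from \eqref{eq:alpha_beta}, and the definition of $m$ shows that $|D'|$ still lies below $\beta N/d$, so expansion gives $|N_\Gamma(D')| \ge d|D'|$. But $N_\Gamma(D') \subseteq U \cup (W \setminus W_h)$, hence combining the already-established bound $|N_\Gamma(D') \cap U| < d|D'|/2$ with $|W| \le \alpha N/64$ from \eqref{eq:W} gives $|N_\Gamma(D')| < d|D'|/2 + \alpha N/64$. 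Since $|D'| > \alpha N/(32d)$ is equivalent to $\alpha N/64 < d|D'|/2$, this yields $|N_\Gamma(D')| < d|D'|$, a contradiction.

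The most delicate step is verifying $|D'| \le \beta N/d$ in the second case: this is where the numerical constants in the definition of $m$ and in property \ref{prop:W_v} are effectively being calibrated, and it is the only place the sharper lower bound $\beta \ge \alpha/8$ (rather than just $\beta > 0$) is genuinely used.
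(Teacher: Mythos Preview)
Your argument is correct and matches the paper's proof essentially line for line: the same contradiction setup, the same case split on whether $|D \cup W_h| \le \alpha N/(32d)$, and the same use of $(\beta,d)$-expansion together with the bound $|W| \le \alpha N/64$ in the second case. One small remark: in verifying $|D'| \le \beta N/d$, the relevant input is not the definition of $m$ but simply that $n$ is sufficiently large compared to $\alpha$ (so that $24\log n/(\beta \log d) \le 192\alpha^{-1}\log n$ is dominated by $3\beta N/(4d)$); the definition of $m$ controls $|W|$, not an individual $|W_h|$.
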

    \begin{proof}
        Suppose, towards a contradiction, that this is not the case for some $v \in V'$. Then
        \begin{equation} \label{eq:D_Wv}
            |N_\Gamma(D \cup W_v) \cap U| < d|D \cup W_v|/2.
        \end{equation}
        If $|D \cup W_v| \le \alpha N / (32 d)$, by setting $D := D \cup W_v$ and $V' := V' \setminus \{v\}$, we obtain a partition in $\mathcal{P}$ with a larger set $D$ -- which is a contradiction. Therefore, we can assume $|D \cup W_v| > \alpha N / (32 d)$. We also have the following upper bound,
        $$
            |D \cup W_v| \le \frac{\alpha N}{32 d} + \frac{24 \log n}{\beta \log d} \le \frac{\beta N}{4d} + 192 \alpha^{-1} \log n < \beta N / d.
        $$
        The first inequality follows from \ref{prop:D} and \ref{prop:W_v}, the second from \eqref{eq:alpha_beta}, and the third from the fact that $n$ is sufficiently large with respect to $\alpha$. As $\Gamma$ is a $(\beta, d)$-expander, we conclude 
        $$
            |N_\Gamma(D \cup W_v)| \ge d|D \cup W_v| > \alpha N / 32.
        $$
        From \eqref{eq:W} we get
        $$
            |N_\Gamma(D \cup W_v) \cap U| \ge |N_\Gamma(D \cup W_v)| - |W| > d|D \cup W_v|/2.
        $$
        This contradicts \eqref{eq:D_Wv}.
    \end{proof}

    \begin{claim} \label{claim:small_diameter}
        The subgraph $\Gamma[U]$ is a connected $(\beta/2, d/2)$-expander. In particular, its diameter is at most $12 \log n / (\beta \log d)$.
    \end{claim}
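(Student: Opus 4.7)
The plan is to verify the three assertions in turn: the $(\beta/2, d/2)$-expansion of $\Gamma[U]$, then connectivity, and finally the diameter bound, the last of which will follow directly from Lemma \ref{lemma:diameter}. For expansion I would take any $S \subseteq U$ with $|S| \le \beta|U|/d$ and assume for contradiction that $|N_\Gamma(S) \cap U| < (d/2)|S|$, splitting into three cases based on the size of $D \cup S$. If $|D \cup S| \le \alpha N/(32d)$, set $D' := D \cup S$ and $U' := U \setminus S$: property \ref{prop:D} for $D'$ follows by summing the bounds for $D$ and $S$, so the new partition lies in $\mathcal{P}$ with $|D'| > |D|$, contradicting maximality of $|D|$. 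If $\alpha N/(32d) < |D \cup S| \le \beta N/d$, apply the $(\beta, d)$-expansion of $\Gamma$ to $D \cup S$; since $|W| \le \alpha N/64 \le d|D \cup S|/2$, this yields $|N_\Gamma(D \cup S) \cap (U \setminus S)| \ge d|D \cup S|/2$, and combining with property \ref{prop:D} forces $|N_\Gamma(S) \cap U| \ge d|S|/2$, a contradiction. Finally, if $|D \cup S| > \beta N/d$, then $|D| \le \alpha N/(32d) \le \beta N/(4d)$ by \eqref{eq:alpha_beta} forces $|S| \ge 3\beta N/(4d)$, which is large enough that $\Gamma$-expansion applied directly to $S$, after subtracting $|D| + |W| \le 3\alpha N/64$, still gives $|N_\Gamma(S) \cap U| \ge d|S|/2$.

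For connectivity, let $C$ be any component of $\Gamma[U]$; since $C$ has no external neighbours in $U$, we have $N_\Gamma(C) \subseteq D \cup W$. The expansion just proved implies $|C| > \beta|U|/d$, and if $|C| < \beta N/d$, the $(\beta,d)$-expansion of $\Gamma$ combined with $|N_\Gamma(C)| \le |D \cup W| \le 3\alpha N/64$ contradicts $|C| > \beta|U|/d$ via \eqref{eq:alpha_beta}; hence $|C| \ge \beta N/d$. Lemma \ref{lemma:ball} applied to a subset $C_0 \subseteq C$ of size $\beta N/d$ then gives $|B_\Gamma(C_0,1)| \ge \beta N = \alpha n/8$, of which at most $|N_\Gamma(C)| \le |D \cup W| \le 3\alpha n/64$ lies outside $C$, so $|C| \ge 5\alpha n/64 \ge \beta N/4$. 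If $\Gamma[U]$ were disconnected, picking one component as $A$ and the union of the remaining ones as $B$ would yield a partition $V(\Gamma) = (D \cup W) \cup A \cup B$ satisfying the hypotheses of property \eqref{prop:edge_across} of Lemma \ref{lemma:robust_partition}, and the guaranteed edge between $A$ and $B$ would contradict their lying in distinct components of $\Gamma[U]$.

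The diameter bound then follows immediately from Lemma \ref{lemma:diameter} applied to the connected $(\beta/2, d/2)$-expander $\Gamma[U]$, which gives diameter at most $3\log|U|/((\beta/2)\log(d/2)) \le 12\log n/(\beta \log d)$, using $|U| \le n$ and $\log(d/2) \ge (\log d)/2$, valid for $d \ge 4$. The main technical obstacle is the three-case split in the expansion argument: maximality of $|D|$ only handles $S$ that fits under the cap $\alpha N/(32d)$, while larger $S$ must be absorbed by $\Gamma$-expansion (either of $D \cup S$ or of $S$ alone), and bridging these regimes relies on the precise relation $\beta \ge \alpha/8$ from \eqref{eq:alpha_beta} to ensure the contributions of $|D|$ and $|W|$ are dominated by the gain from expansion.
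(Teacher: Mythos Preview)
Your proof is correct and follows essentially the same three-part structure as the paper: a three-case split on $|D\cup S|$ for expansion, then large components plus property \ref{prop:Gammaii} for connectivity, then Lemma \ref{lemma:diameter} for the diameter. There are two minor variations worth noting. In the third case of the expansion argument the paper expands a subset $D'\subseteq D\cup X$ of size exactly $\beta N/d$ rather than expanding $S$ directly; your version works equally well once one checks $d|S|/2 \ge 3\beta N/8 \ge 3\alpha N/64 \ge |D|+|W|$ via \eqref{eq:alpha_beta}. For connectivity, the paper gets $|C|\ge \beta|U|/2 > \beta N/4$ in a single step by applying Lemma \ref{lemma:ball} \emph{inside the $(\beta/2,d/2)$-expander $\Gamma[U]$} (the ball around any vertex eventually reaches size $(\beta/2)|U|$ while staying in its component), whereas you take a longer detour through $\Gamma$-expansion of $C_0$ and subtraction of $|D\cup W|$. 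Both routes are valid; the paper's is shorter.
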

    \begin{proof}
        Suppose first that $\Gamma[U]$ is not a $(\beta/2, d/2)$-expander. Then there there exists a subset $X \subseteq U$, $|X| \le \beta |U| / d$ such that $|N_\Gamma(X) \cap U| < d|X|/2$. Then
        \begin{equation} \label{eq:X}
            |N_\Gamma(D \cup X) \cap U| < d|D \cup X|/2. 
        \end{equation}
        Depending on the size of $D \cup X$, we obtain a contradiction as follows:
        \begin{itemize}
            \item $|D \cup X| \le \alpha N / (32d)$: Setting $D := D \cup X$ and $U := U \setminus X$ gives a partition in $\mathcal{P}$ with larger $D$. This contradicts the choice of the partition.
            \item $\alpha N / (32 d) \le |D \cup X| \le \beta N / d$: From the fact that $\Gamma$ is a $(\beta, d)$-expander and the upper bound \eqref{eq:W}, we conclude 
            $$
                |N_\Gamma(D \cup X) \cap U| = |N_\Gamma(D \cup X)| - |W| \ge d|D \cup X|/2.
            $$
            This contradicts \eqref{eq:X}.
            \item $|D \cup X| > \beta N / d$: From the upper bound $|X| \le \beta |U| / d$ and $|D| \le \beta N / (4d)$ (follows from \eqref{eq:alpha_beta}), we have $|D \cup X| \le \frac{5}{4} \beta N / d$. Let $D' \subseteq D \cup X$ be an arbitrary subset of size $|D'| = \beta N / d$. Then 
            $$
                |(D \cup X) \setminus D'| \le \beta N / (4d), 
            $$
            and so
            \begin{align*}
                |N_\Gamma(D \cup X) \cap U| &= |N_\Gamma(D')| - |(D \cup X) \setminus D'| - |W|  \\
                &\ge \beta N - \beta N / (4d) - \alpha N / 64 \ge 3 \beta N  / 4 \ge d|D \cup X|/2.
            \end{align*}
            Again, this contradicts \eqref{eq:X}.
        \end{itemize}
        
        We now show that $\Gamma[U]$ is connected. Since $\Gamma[U]$ is a $(\beta/2, d/2)$-expander, by Lemma \ref{lemma:ball} we conclude that the smallest connected component in $\Gamma[U]$ is of size at least $\beta |U| / 2 > \beta N / 4$. Suppose, towards a contradiction, that $\Gamma[U]$ is not connected. Let $A \subseteq U$ be the set of vertices of one component, and set $B = U \setminus A$. Then there is no edge in $\Gamma$ between $A$ and $B$. Consider the partition $V(\Gamma) = R \cup A \cup B$, where $R = D \cup W$. By the previous discussion, we have $|A|, |B| \ge \beta N / 4$. From \ref{prop:D} and \eqref{eq:W} we have $|R| \le \alpha N / 32$, and by the property \ref{prop:Gammaii} of $\Gamma$ we have that there is an edge between $A$ and $B$ --- thus a contradiction.

        To conclude, $\Gamma[U]$ is a connected $(\beta/2, d/2)$-expander. By Lemma \ref{lemma:diameter}, the diameter of $\Gamma[U]$ is most $12 \log |U| / (\beta \log d)$.
    \end{proof}

    Having these claims at hand, we proceed with proving $V' = V(H)$. Suppose, towards a contradiction, that $V' \neq V(H)$ and consider an arbitrary $h \in V(H) \setminus V'$. If $N_H(h) \cap V' = \emptyset$ then pick an arbitrary $v \in U$, and set $U := U \setminus \{v\}$, $W_h := \{v\}$ and $V' := V' \cup \{h\}$. This gives a partition in $\mathcal{P}$ with the same $D$ and larger $V'$, which is a contradiction. Therefore, we can assume $|N_H(h) \cap V'| \in \{1, 2,3\}$. Without loss of generality, assume $N_G(h) \cap V' = \{h_1, h_2, h_3\}$. Arbitrarily choose vertices $v_i \in N_\Gamma(W_{h_i}) \cap U$ for $i \in \{1,2,3\}$. This is possible due to Claim \ref{claim:v}. Let $L_i \subseteq U$ denote the vertices on a shortest path from $v_i$ to $v_{i+1}$ in $\Gamma[U]$, for $i \in \{1,2\}$. By Claim \ref{claim:small_diameter} we have $|L_i| \le 12 \log n / (\beta \log d)$.  Therefore, the set $W_h := L_1 \cup L_2$ is of size at most $24 \log n / (\beta \log d)$, $\Gamma[W_h]$ is connected, and there is an edge between $W_{h_i}$ and $W_h$ for every $i \in \{1,2,3\}$. Setting $U := U \setminus W_h$ and $V' := V' \cup \{h\}$, again, gives a partition in $\mathcal{P}$ with larger $V'$, which is a contradiction. Therefore, we conclude $V' = V(H)$.
\end{proof}

\subsection{Large complete minors}

Theorem \ref{thm:expanding_minors} implies a small-set expander contains a complete minor of order $\sqrt{n \log t / \log n}$, which matches \cite[Proposition 4.3]{krivelevich09minors}. Significantly improved bound in Theorem \ref{thm:complete_minors}, replacing $\log t$ with $t$, is based on a more efficient way of connecting a large number of large sets. This is captured by the following lemma, inspired by a similar statement for edge-expanders \cite[Lemma 3.1]{krivelevich21sparsecut}. 

\begin{lemma} \label{lemma:efficient_cover}
For every $0 < \alpha < 1$ there exists $s_0 > 1$ such that the following holds. Let $G$ be a connected $(\alpha, t)$-expander with $n$ vertices, for some $t \ge 2$. Let $U_1, \ldots, U_q \subseteq V(G)$ be subsets of size $|U_i| \ge s$, for some $1 \le q < n$ and $s_0 \le s \le n / \log n$ such that $qs \ge 2n$. Then there exists a set $T \subseteq V(G)$ of size at most
$$
    |T| \le \frac{25}{\alpha^2} \; \cdot \frac{n}{s} \log\left(\frac{qs}{n}\right) \cdot \frac{\log n}{\log t}, 
$$
such that $G[T]$ is connected and $T \cap U_i \neq \emptyset$ for every $i \in [q]$. 
\end{lemma}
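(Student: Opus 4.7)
The plan is to reduce the statement to two independent sub-tasks: first, find a small \emph{hitting set} $H \subseteq V(G)$ that meets every $U_i$; second, connect $H$ into a single connected subgraph of $G$ using the small-diameter property of Lemma~\ref{lemma:diameter}. The final set $T$ is this connected subgraph, whose size is bounded by $|H|$ plus the total length of the connecting paths.

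For the hitting set I would use a random sampling argument. Include each vertex of $V(G)$ independently with probability $p$, and let $Y$ consist of the sample together with one arbitrary vertex from every $U_i$ missed by the sample, so that $Y$ hits every $U_i$ by construction. For a fixed $U_i$ of size at least $s$, the probability of missing it is at most $(1-p)^s \le e^{-ps}$, hence $\mbE[|Y|] \le pn + q e^{-ps}$. Choosing $p = \ln(qs/n)/s$ (which is positive since $qs \ge 2n$ and at most one provided $s$ is large enough in terms of $\alpha$) balances the two terms and gives $\mbE[|Y|] = O((n/s)\log(qs/n))$; an averaging argument then supplies a realisation $H$ of this size.

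For the connecting step, Lemma~\ref{lemma:diameter} gives $\mathrm{diam}(G) \le 3\log n /(\alpha \log t)$. Starting from an arbitrary $h_1 \in H$, I would grow a connected set $T$ greedily by repeatedly attaching a shortest path in $G$ from the current set to some $h \in H$ not yet reached. Each such path has at most $3\log n/(\alpha \log t) + 1$ vertices, so $|T| \le |H| \cdot 4\log n/(\alpha \log t)$. Together with the bound on $|H|$ this yields $|T| = O\bigl(\alpha^{-1} \cdot (n/s) \log(qs/n) \cdot \log n/\log t\bigr)$, matching the claim up to the constant factor.

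The main obstacle I anticipate is the parameter regime in which the sampling probability $p = \ln(qs/n)/s$ would exceed $1$, corresponding to very small $s$. Here I would simply fall back on the trivial choice $T = V(G)$; the threshold $s_0 = s_0(\alpha)$ in the statement can be chosen large enough that the claimed bound already exceeds $n$ in this regime, so the trivial bound suffices. A secondary point is that my sketch delivers $|T| = O(1/\alpha)$ in front of the main expression, whereas the lemma states $25/\alpha^2$; the additional factor of $1/\alpha$ is presumably absorbed either in converting the expected sample size to a guaranteed realisation (via Markov) or in bookkeeping during the greedy connection step, and I would track this carefully when writing out the full proof.
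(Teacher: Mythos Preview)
Your approach is correct and in fact slightly cleaner than the paper's. Both arguments sample a random set $P$ and then connect it using the diameter bound of Lemma~\ref{lemma:diameter}, but they differ in how $P$ relates to the sets $U_i$. You take $p = \ln(qs/n)/s$ so that $P$ (after the fix-up step) \emph{hits} each $U_i$ directly; the paper instead takes $p = 4\log(qs/n)/(\alpha s)$ and only asks that $P$ be \emph{close} to each $U_i$, invoking the ball-growth estimate of Lemma~\ref{lemma:ball} to bound $\mathbb{E}[\textrm{dist}_G(U_i,P)]$ and then appending a short path from every $U_i$ to $P$ before connecting $P$ itself. That detour through ball growth is precisely what forces the extra factor of $1/\alpha$ into $p$ (the ball is only guaranteed to reach size $\alpha n$), and hence the $25/\alpha^2$ in the final bound. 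Your route avoids this entirely and genuinely yields the leading constant $O(1/\alpha)$, so there is no hidden factor you need to track down --- you simply prove a marginally stronger inequality than stated. Your concern about $p>1$ is also unnecessary: since $q<n$ we have $qs/n < s$, hence $p = \ln(qs/n)/s < (\ln s)/s < 1$ once $s_0 \ge 3$, with no dependence on $\alpha$.
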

\begin{proof}
    Let $P \subseteq V(G)$ be a subset obtained by taking each vertex in $G$, independently, with probability $p = 4 \log(q s / n) / (\alpha s)$. As $s$ is sufficiently large, we have $p < 1$. Let $X_i = \textrm{dist}_G(U_i, P)$. In particular, $X_i \ge z$, for some $z \ge 1$, is equivalent to $B(U_i, z-1) \cap P = \emptyset$. Note that $\Pr[X_i = z] = \Pr[X_i \ge z] - \Pr[X_i \ge z+1]$ and, as $G$ is connected, $\Pr[X_i \ge n] = 0$. Therefore,
    $$
        \mathbb{E}[X_i] = \sum_{z = 1}^n z \Pr[X_i = z] = \sum_{z = 1}^{n-1} z(\Pr[X_i \ge z] - \Pr[X_i \ge z+1]) = \sum_{z = 1}^{n-1} \Pr[X_i \ge z].
    $$
    As each vertex is included in $P$ independently, we have
    $$
        \Pr[X_i \ge z] = (1-p)^{|B_G(U_i, z-1)|} \le e^{-p|B_G(U_i, z-1)|} = \left( \frac{n}{q s} \right)^{4 \alpha^{-1} |B_G(U_i, z-1)| / s} .
    $$
    From $|B_G(U_i, z - 1)| \ge \min\{\alpha n, s t^{z-1}\}$, which follows from Lemma \ref{lemma:ball}, and $n/s \ge \log n$ we conclude
    \begin{equation} \label{eq:bound_short}
        \mathbb{E}[X_i] = \sum_{z = 1}^{n-1} \Pr[X_i \ge z] \le \sum_{z = 1}^{n-1} \left( \frac{n}{qs} \right)^{4 n / s} + \sum_{z = 1}^\infty \left( \frac{n}{qs} \right)^{4 t^{z-1}} \le  \frac{n}{qs}.
    \end{equation}

    Let $T' \subseteq V(G)$ be obtained by taking a shortest path from each $U_i$ to $P$. Then $|T' \setminus P| \le \sum_{i = 1}^q X_i$, thus
    $$
        \mathbb{E}[|T' \setminus P|] \le \sum_{i = 1}^q \mathbb{E}[X_i] \le \frac{n}{s}.
    $$
    As $\mathbb{E}[P] = pn$, by Markov's inequality we have $|P| \le 2np$ and $|T' \setminus P| \le 2n/s$ with positive probability. Therefore, there exists a choice of $P$ for which the two inequalities hold. Fix one such choice. Choose a vertex $v \in P$, and for each $w \in P \setminus \{v\}$ let $L_w$ denote the vertices on a shortest path from $v$ to $w$. By Lemma \ref{lemma:diameter}, the set $T := T' \cup \bigcup_{w \in P \setminus \{v\}} L_w$ is of size 
    $$
        |P| \frac{3 \log n}{\alpha \log t} + |T' \setminus P| \le \frac{6 np \log n}{\alpha \log t} + \frac{2 n}{s} \le  \frac{25}{\alpha^2} \cdot \; \frac{n}{s} \log\left(\frac{qs}{n} \right) \cdot \frac{\log n}{\log t}.
    $$
    By the construction, $G[T]$ is connected and $T$ intersects every set $U_i$.
\end{proof}

We are now ready to prove Theorem \ref{thm:complete_minors}.

\begin{proof}[Proof of Theorem \ref{thm:complete_minors}]
Let $X \subseteq V(G)$ be a subset given by Lemma \ref{lemma:robust_partition}. We pass on to $\Gamma = G[X]$. The graph $\Gamma$ has $N \ge \alpha n / 8$ vertices, and for $\beta = \alpha n / (8N)$ and $d = t/4$, the following holds:
\begin{enumerate}[(i)]
    \item $\Gamma$ is a $(\beta, d)$-expander, and
    \item for every partition $V(\Gamma) = R \cup A \cup B$ with $|R| \le \alpha N/16$ and $|A|, |B| \ge \beta N / 4$, there is an edge in $\Gamma$ between $A$ and $B$.
\end{enumerate}

Let $K = 25 / \alpha^2$ (the constant in the upper bound on the size of $T$ in Lemma \ref{lemma:efficient_cover}). Consider the family $\mathcal{P}$ of all ordered partitions $V(\Gamma) = D \cup \left( \bigcup_{i = 1}^k W_i \right) \cup U$, where $k \ge 0$ (this can be a different number for different partitions), such that the following holds:
\begin{enumerate}[(P1)]
    \item For every $i \in [k]$: $|W_i| = \sqrt{\frac{2 K N \log N}{d}} =: \ell$ and $\Gamma[W_i]$ is connected,
    \item for every two distinct $i, i' \in [k]$, there is an edge in $\Gamma$ between $W_i$ and $W_{i'}$, and
    \item $|D| \le \alpha N / (32 d)$ and $|N_\Gamma(D) \cap U| < d |D| / 2$.
\end{enumerate}
The first two properties imply that $K_k$ is a minor of $\Gamma$. 

By taking $D = \emptyset$, $k = 0$ and $U = V(\Gamma)$, we have that $\mathcal{P}$ is not empty. Consider a partition $V(\Gamma) = D \cup \left( \bigcup_{i = 1}^k W_i \right) \cup U$ in $\mathcal{P}$ which maximises $|D|$, tie-breaking by taking one which further maximises $k$. We prove that then necessarily 
$$
    k \ge \frac{\alpha N}{64 \ell} =: q.
$$
As $q = \Theta(\sqrt{n t / \log n})$, this establishes the theorem. Suppose, towards a contradiction, that this is note the case. That is, $k < q$. Then
\begin{equation*}
    |W| = \left| \bigcup_{i = 1}^k W_i \right| < \alpha N / 64,
\end{equation*}
from which we conclude $|U| \ge N/2$, with room to spare.

The property \ref{prop:D} in the proof of Theorem \ref{thm:expanding_minors} is identical to the one used here, and the only property of $W$ used in the proof of Claim \ref{claim:v} and Claim \ref{claim:small_diameter} is the upper bound on $|W|$, which is identical to the one used here. Therefore, same as in the proof of Theorem \ref{thm:expanding_minors}, the following holds:
\begin{itemize}
    \item For every $i \in [k]$ we have $|N_\Gamma(W_i) \cap U| \ge d|W_i|/2$, and
    \item $\Gamma[U]$ is a connected $(\beta/2, d/2)$-expander.
\end{itemize}
Now we can finish the proof using Lemma \ref{lemma:efficient_cover}. For each $i \in [k]$, set $U_i = N_\Gamma(W_i) \cap U$. Then $|U_i| \ge  d \ell / 2 =: s$. For $k < i \le q$, take $U_i \subseteq U$ to be an arbitrary set of size $s$. Apply Lemma \ref{lemma:efficient_cover} with sets $U_1, \ldots, U_q$, which we indeed can do as $qs > 2|U|$ and $|U| / s > \log |U|$, where the former follows from $d \ge t_0(\alpha)/4$ and the latter follows from $d \le \sqrt{n}$. We obtain a subset $T \subseteq U$ of size
$$
    |T| \le K \frac{|U|}{s} \log\left( \frac{qs}{|U|} \right) \cdot \frac{\log |U|}{\log d} \le \ell,
$$
such that $\Gamma[T]$ is connected and $T \cap U_i \neq \emptyset$ for every $i \in [k]$. Since $\Gamma[U]$ is connected, we can add more vertices from $U$ to $T$ such that $T$ remains connected and is of size exactly $\ell$. By setting $W_{k+1} := T$ and $U := U \setminus T$, we obtain a partition in $\mathcal{P}$ with the same size of the set $D$ and larger $k$, which is a contradiction. Therefore, $K_q$ is a minor of $\Gamma$.
\end{proof}

\section{Upper bound on minor universality} \label{sec:upper}

\begin{proof}[Proof of Theorem \ref{thm:upper_bound}]  
If $G$ is not connected, then add at most $n-1$ edges such that it becomes connected. The average degree of the resulting graph $G$ is at most $d+2$.

We can assume $d + 2 \le n^{1/6}$, as otherwise $m > n$ and assertion of the theorem trivially holds. We will argue that the number of minors of graphs with $m/2$ vertices (we tacitly assume $m$ is even) and $m$ edges in $G$ is less than the number of non-isomorphic graphs with so many vertices and edges. Therefore, at least one such graph is not a minor of $G$.

Let us estimate from above the number of minors of $G$ with $m/2$ vertices and $m$ edges. We first choose $m/2$ disjoint connected sets $B_1,\ldots,B_{m/2}$ in $G$. Since $G$ is connected we may assume that $\bigcup_{i=1}^{m/2}B_i=V(G)$. Given $B_1,\ldots,B_{m/2}$ as above, we can choose a spanning tree $T_i$ in each of $B_i$'s, and add to their union $m/2-1$ edges of $G$ to create a spanning tree $T$ of $G$ (this is possible due to $G$ being connected). Reversing the process, we can estimate the number of ways to choose $B_1,\ldots,B_{m/2}$ as follows:
\begin{enumerate}
\item Choose a spanning tree $T$ of $G$. This can be done in at most $\prod_{v\in V} d_G(v)\le (d+2)^n$ ways (see, e.g., \cite{kostochka95count});
\item Choose $m/2-1$ edges to delete from $T$. This can be done in $\binom{n-1}{m/2-1}$ ways.
\end{enumerate}
Having fixed the blocks $B_1,\ldots, B_{m/2}$, we can choose $m$ edges between them in at most 
$$
    \binom{|E(G)|}{m} \le \binom{(d+2)n/2}{m}
$$ 
ways. Altogether, the number of minors of $G$ with $m/2$ vertices and $m$ edges is at most
\begin{equation} \label{eq:minor_ub_1}
(d+2)^n\binom{n-1}{m/2-1}\binom{(d+2)n/2}{m} < (d+2)^{n}\left(\frac{2en}{m}\right)^{m/2} \left(\frac{2(d+2)n}{m}\right)^{m}\,.    
\end{equation}

Next, we derive a lower bound on the number of non-isomorphic graphs with $m/2$ vertices and $m$ edges. Since every graph with $m/2$ labelled vertices is isomorphic to at most $(m/2)! < m^{m/2}$ graphs on the same vertex set, we derive that the number of non-isomorphic graphs with $m/2$ vertices and $m$ edges is at least
\begin{equation} \label{eq:minod_ub_2}
\frac{\binom{\binom{m/2}{2}}{m}}{(m/2)!}> \frac{\left(\frac{m}{16}\right)^{m}}{m^{m/2}}=\left(\frac{m}{256}\right)^{m/2}\,.
\end{equation}

It remains to compare \eqref{eq:minor_ub_1} and \eqref{eq:minod_ub_2}. Recalling the value of $m$, the assumption $d + 2 \le n^{1/6}$, and that $n$ is sufficiently large, we derive:
\begin{eqnarray*}
\left(\frac{m}{256}\right)^{m/2} \left(\frac{m}{2en}\right)^{m/2} \left(\frac{m}{2(d+2)n}\right)^{m} &=& \left(\frac{m^4}{2^{11} e (d+2)^2 n^3 }\right)^{m/2}  > \left(\frac{n^4 \ln^4(d+2)}{2^{11} e (d+2)^2 n^3 \ln^4n}\right)^{m/2}\\
&>& (n^{3/5})^{m/2} > e^{n\ln (d+2)} = (d+2)^{n}\,.
\end{eqnarray*}
We conclude there exists a graph $H$ with $m/2$ vertices and $m$ edges which is not a minor of $G$. In fact, our argument shows that most such graphs are not minors of $G$.
\end{proof}

\bibliographystyle{abbrv}
\bibliography{minor_universality_final}

\appendix

\section{Expansion of $(n,d,\lambda)$-graphs}

Given a graph $G$ and two subsets $X, Y \subseteq V(G)$, we denote with $e(X, Y)$ the number of pairs of vertices $(x,y) \in X \times Y$ such that $\{x, y\} \in E(G)$. We make use of the Expander Mixing Lemma \cite{alon88mixing}.

\begin{lemma}[Expander Mixing Lemma]
    Let $G$ be an $(n, d, \lambda)$-graph. Then for every two sets $X, Y \subseteq V(G)$, we have
    $$
        e(X, Y) \le \frac{d}{n} |X||Y| + \lambda \sqrt{|X||Y|}.
    $$
\end{lemma}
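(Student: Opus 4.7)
The plan is the standard spectral argument. Since $G$ is $d$-regular, its adjacency matrix $A$ is symmetric and admits an orthonormal eigenbasis $v_1,\ldots,v_n$ with eigenvalues $d = \mu_1 \ge \mu_2 \ge \cdots \ge \mu_n$. The all-ones direction is an eigenvector with eigenvalue $d$, so I may take $v_1 = \mathbf{1}/\sqrt{n}$, and by the definition of an $(n,d,\lambda)$-graph, $|\mu_i| \le \lambda$ for all $i \ge 2$.

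The first key step is to rewrite $e(X,Y)$ as a bilinear form. Letting $\chi_X, \chi_Y \in \{0,1\}^n$ denote the characteristic vectors of $X$ and $Y$, one has $e(X,Y) = \chi_X^{\top} A \chi_Y$, where $e(X,Y)$ counts ordered incident pairs as defined in the excerpt. Next I would expand $\chi_X = \sum_i \alpha_i v_i$ and $\chi_Y = \sum_i \beta_i v_i$ in the eigenbasis. Taking inner products with $v_1$ gives $\alpha_1 = |X|/\sqrt{n}$ and $\beta_1 = |Y|/\sqrt{n}$, while Parseval's identity yields $\sum_i \alpha_i^2 = \|\chi_X\|_2^2 = |X|$ and similarly $\sum_i \beta_i^2 = |Y|$.

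Plugging in gives
\[
    e(X,Y) = \sum_{i=1}^n \mu_i \alpha_i \beta_i = \frac{d}{n}|X||Y| + \sum_{i \ge 2} \mu_i \alpha_i \beta_i.
\]
The final step is to bound the tail sum. Using $|\mu_i| \le \lambda$ for $i \ge 2$ together with the Cauchy--Schwarz inequality,
\[
    \Bigl|\sum_{i \ge 2} \mu_i \alpha_i \beta_i\Bigr| \le \lambda \sqrt{\sum_{i \ge 2}\alpha_i^2}\sqrt{\sum_{i \ge 2}\beta_i^2} \le \lambda \sqrt{|X||Y|},
\]
which yields the stated one-sided bound (and in fact the two-sided one $|e(X,Y) - (d/n)|X||Y|| \le \lambda\sqrt{|X||Y|}$).

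There is no real obstacle here; the only technical point to be careful about is the convention that $e(X,Y)$ counts ordered incident pairs, so that the identity $e(X,Y) = \chi_X^{\top}A\chi_Y$ holds exactly without a factor of two. Everything else is a routine application of the spectral theorem and Cauchy--Schwarz.
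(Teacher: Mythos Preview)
Your argument is the standard and correct spectral proof of the Expander Mixing Lemma. Note, however, that the paper does not actually prove this statement: it is quoted as a known result with a citation to Alon and Chung, so there is no in-paper proof to compare against. Your write-up would serve perfectly well as a self-contained proof if one were desired.
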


The following lemma shows that $(n, d, \lambda)$-graphs, for $\lambda < d/4$, are small-set expanders.
\begin{lemma} \label{lemma:ndl_expander}
    Let $G$ be an $(n, d, \lambda)$-graph with $\lambda < d/4$. Then $G$ is a $(1/4, d^2/(4\lambda)^2)$-expander.
\end{lemma}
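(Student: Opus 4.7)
The plan is to apply the Expander Mixing Lemma to the pair $(X, S)$ where $S = X \cup N(X)$, rather than the more obvious choice $(X, V \setminus S)$. The key observation is that every edge incident to $X$ has its other endpoint in $S$, so the degree sum gives an exact count
\[
    e(X, S) = \sum_{v \in X} d_G(v) = d|X|.
\]
Plugging this into the Expander Mixing Lemma yields $d|X| \le d|X||S|/n + \lambda \sqrt{|X||S|}$. Rearranging, dividing by $|X|$, and squaring, I would obtain the clean inequality
\[
    d^2 |X| \left(1 - \tfrac{|S|}{n}\right)^2 \le \lambda^2 |S|.
\]

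From here the proof splits into two cases depending on whether $|S| \ge n/2$. If $|S| \ge n/2$, then since $|X| \le \alpha n / t = n/(4t) \le n/4$ (using $t \ge 1$, which follows from $\lambda < d/4$), we have $|N(X)| = |S| - |X| \ge n/2 - n/4 = n/4 \ge t|X|$. If instead $|S| < n/2$, then $(1 - |S|/n)^2 \ge 1/4$, so the displayed inequality forces $|S| \ge d^2|X|/(4\lambda^2) = 4t|X|$, and therefore $|N(X)| = |S| - |X| \ge (4t - 1)|X| \ge t|X|$ since $t > 1$. In either case the required expansion bound holds.

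I do not expect any serious obstacle: the whole argument is essentially one application of EML plus an elementary case split. The only non-obvious choice is to apply EML to $(X, S)$ instead of to $(X, V \setminus S)$; the latter would use the fact that $e(X, V \setminus S) = 0$ but requires the two-sided version of EML (only the upper bound is stated in the paper), and moreover the resulting bound $|X|(n - |S|) \le \lambda^2 n^2 / d^2$ degenerates for very small $|X|$, whereas the $(X, S)$ approach gives a bound uniform in $|X|$ and requires only the upper-bound form of EML already stated.
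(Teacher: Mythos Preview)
Your argument is correct. It differs from the paper's proof in a clean way: the paper applies the Expander Mixing Lemma twice, first to $(X,X)$ to show $e(X,X)<d|X|/2$, hence $e(X,N(X))>d|X|/2$, and then to $(X,N(X))$ under the contradictory assumption $|N(X)|<t|X|$ to force $e(X,N(X))<d|X|/2$. Your approach applies the lemma once, to $(X,S)$ with $S=X\cup N(X)$, exploiting the exact identity $e(X,S)=d|X|$ and then doing a short case split on whether $|S|\ge n/2$. What you gain is a single invocation of EML and a direct rather than contradiction argument; what the paper's route gains is that it avoids the case analysis entirely. Both are equally elementary, and your observation that working with $(X,S)$ rather than $(X,V\setminus S)$ lets you use only the one-sided EML as stated is a nice point.
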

\begin{proof}
    Set $t := d^2 / (4\lambda)^2$. Consider some $X \subseteq V(G)$ of size $|X| \le n / (4t)$. By the Expander Mixing Lemma, the upper bound on the size of $X$, and the upper bound on $\lambda$, we have
    $$
        e(X, X) \le \frac{d}{n} \cdot \frac{n}{4t} |X|  + \lambda |X| = \frac{4 \lambda^2}{d} |X| + \lambda |X| < d|X|/2.
    $$
    Let $Y = N(X)$. The previous upper bound on $e(X,X)$ implies $e(X, Y) > d|X|/2$. Suppose, towards a contradiction, that $|Y| < t|X|$. Applying the Expander Mixing Lemma once again, we get
    $$
        e(X, Y) \le \frac{d}{n} |X|^2 t + \lambda |X| \sqrt{t} \le \frac{d}{n} \cdot \frac{n}{4t} |X|t + d |X| /4 < d|X|/2.
    $$
    This contradicts the lower bound $e(X, Y) \ge d|X|/2$.
\end{proof}

\end{document}